\theoremstyle{plain}
\newtheorem{theorem}{Theorem}[section]
\newtheorem{lemma}[theorem]{Lemma}
\newtheorem{cor}[theorem]{Corollary}
\theoremstyle{remark}
\newtheorem{remark}{Remark}[section]
\theoremstyle{definition}
\newtheorem{definition}{Definition}[section]
\newcommand{\linspan}{\mathop{\rm span}\nolimits}
\newcommand{\diver}{\mathop{\rm div}\nolimits}
\newcommand{\curl}{\mathop{\rm curl}\nolimits}
\newcommand{\sign}{\mathop{\rm sign}\nolimits}
\newcommand{\rest}{\left.\kern-2\nulldelimiterspace\right|_}
\newcommand{\norm}[2]{|#1|_{#2}}
\newcommand{\p}{\partial}
\newcommand{\e}{\varepsilon}
\newcommand{\ed}{\mathrm d}
\newcommand{\N}{{\mathbb N}}
\newcommand{\R}{{\mathbb R}}
\newcommand{\D}{{\mathrm D}}
\newcommand{\nnn}{\mathbf n}
\newcommand{\BB}{{\mathcal B}}
\newcommand{\CC}{{\mathcal C}}
\newcommand{\FF}{{\mathcal F}}
\newcommand{\GG}{{\mathcal G}}
\newcommand{\KK}{{\mathcal K}}
\newcommand{\LL}{{\mathcal L}}
\newcommand{\RR}{{\mathcal R}}
\newcommand{\sS}{{\mathcal S}}
\newcommand{\ZZ}{{\mathcal Z}}
\newcommand*{\textlabel}[2]{%
  \edef\@currentlabel{#1}
  \phantomsection
  #1\label{#2}
}
\title[Approximate controllability for Navier--Stokes equations] 
      {Approximate controllability for Navier--Stokes equations in {\rm 3D} Cylinders under Lions boundary conditions by an explicit saturating set}
\author[Duy Phan]{}
\subjclass{Primary: 93B05, 35Q30; Secondary: 93C20.}
 \keywords{ Navier--Stokes equations; approximate controllability; saturating set.}
 \email{duy.phan-duc@uibk.ac.at}
\thanks{The author is supported by Universit\"at Innsbruck. The author would like to appreciate S\'ergio S. Rodrigues (RICAM Linz Austria) for his fruitful discussions to improve this work and Sy Nguyen-Ky (HAMK Finland) for all helpful figures.}
\thanks{$^*$ Corresponding author: duy.phan-duc@uibk.ac.at}
\begin{document}
\maketitle

\centerline{\scshape Duy Phan$^*$}
\medskip
{\footnotesize
 \centerline{Institut f\"ur Mathematik, Universit\"at Innsbruck}
   \centerline{Technikerstra\ss e 13/7, A-6020 Innsbruck, Austria.}
} 

\medskip

\bigskip


\begin{abstract}
An explicit saturating set consisting of eigenfunctions of Stokes operator in general 3D Cylinders is proposed. The existence of saturating sets implies the approximate controllability for Navier--Stokes equations in $\rm 3D$ Cylinders under Lions boundary conditions. 
\end{abstract}

\section{Introduction}
We consider the incompressible 3D Navier--Stokes equation in\\$(0,\,T)\times\Omega$, under Lions boundary conditions,%
\begin{subequations}\label{sys-u-flat}
 \begin{align}
 \p_t u+\langle u\cdot\nabla\rangle u-\nu\Delta u+\nabla p+h&=0, &\diver u &=0,\label{sys-u-flat-eq}\\
 \left.\begin{pmatrix} u\cdot\nnn\\ \curl u-(\nnn \cdot \curl u)\nnn\end{pmatrix}\right|_{\p\Omega}&=\begin{pmatrix}0\\0\end{pmatrix}, &u(0,\,x)&=u_0(x),
\end{align}
\end{subequations}
where~$\Omega\subset\R^3$ is an arbitrary three-dimensional cylinder
\[
 \Omega = (0,L_1) \times (0,L_2) \times \left(  \frac{2L_3}{2 \pi} \mathbb{S}^1  \right),
\]
whose boundary is denoted by $\p{\Omega} \coloneqq \Big(\big(\{0,L_1\} \times (0,L_2)\big))\cup\big((0,L_1) \times \{0,L_2\}\big)\Big) \times
   \left(\frac{{2L_3}}{2 \pi} \mathbb{S}^1 \right)$. As usual $u=(u_1,u_2,u_3)$ and~$p$,
defined for $( t,x_1,x_2,x_3)\in I\times\Omega$, are respectively the unknown velocity field and
pressure of the fluid, $\nu>0$ is the viscosity, the operators~$\nabla$ and~$\Delta$ are respectively the
well known gradient and Laplacian in the space variables $(x_1,x_2,x_3)$,
$\langle u\cdot\nabla\rangle v$ stands for $(u\cdot\nabla v_1,u\cdot\nabla v_2,u\cdot\nabla v_3)$,
$\diver u\coloneqq \sum_{i=1}^3 \p_{x_i}u_i$,
$\curl u \coloneqq \left(\p_{x_2} u_3 - \p_{x_3}u_2,~\p_{x_3} u_1 -\p_{x_1} u_3, \p_{x_1} u_2 - \p_{x_2}u_1 \right)^\top $,
the vector~$\nnn$ stands for the outward unit normal vector to~$\p\Omega$, and $h$ is a fixed function. 

Notice that this is equivalent to take appropriate mixed Lions--periodic boundary conditions in the
infinite channel~$\mathrm R_{\mathrm C} =  (0,L_1) \times (0,L_2) \times \R$:
\begin{subequations} \label{eq-BoundCond}
\begin{align}
 \begin{pmatrix}u\cdot\nnn\\
   \curl u-(\nnn\cdot\curl u)\nnn
  \end{pmatrix}&=0,
 \quad\mbox{on}\quad\Bigl(\bigl(\{0,L_1\} \times (0,L_2)\bigr))\cup\bigl((0,L_1) \times \{0,L_2\}\bigr)\Bigr) \times\R,\\
   u(x_1,x_2,x_3)&=u(x_1,x_2,x_3+2L_3),\quad (x_1,x_2,x_3)\in\mathrm R_{\mathrm C}.
\end{align}
\end{subequations}

The problem can be described as the model where the fluid is contained in a long (infinite) 3D channel with Lions boundary conditions on the bottom and the top of the channel, and with the periodicity assumption on the long (infinite) direction. Lions boundary condition is a particular case of Navier boundary conditions. For works and motivations concerning Lions and Navier boundary conditions (in both 2D and 3D cases) we
refer to~\cite{XiaoXin07,XiaoXin13,PhanRod17,PhanRod18JDCS, ChemetovCiprianoGavrilyuk10,Kelliher06,IlyinTiti06} and references therein.

\medskip
We set the spaces 
\begin{align*}
H &\coloneqq\{u\in L^2(\Omega,\,\R^3)\mid\diver u=0\mbox{ and }(u\cdot\nnn)\rest{\p\Omega}=0\}, \\
 V&\coloneqq\{u\in H^1(\Omega,\,\R^3)\mid u\in H\}.
\end{align*}
We consider $H$, endowed with the norm inherited from $L^2(\Omega,\,\R^3)$, as a pivot space,
that is, $H=H'$. 
For $u,v,w\in V$, we define
\begin{align}
 A&\colon V\to V',&\langle A u,v\rangle_{V',V}&\coloneqq \nu(\curl u,\curl v)_{L^2(\Omega,\,\R^3)},\label{LStokes}\\
 B&\colon V\times V\to V',&\langle B(u,v),w\rangle_{V',V}&\coloneqq -\int_\Omega(\langle u\cdot\nabla\rangle w)\cdot v\,\ed\Omega.\label{Bop}
\end{align}
The domain of operator $A$ is denoted as 
\begin{align*}
\D(A)\coloneqq\{u\in H^2(\Omega,\,\R^3)\mid u\in H, \quad\curl u-(\nnn\cdot\curl u)\nnn\rest{\p\Omega}=0\}.
\end{align*}
We will refer to~$A$ as the Stokes operator, under Lions boundary conditions. Further,
we have the continuous, dense, and compact inclusions~$\D(A)\xhookrightarrow{\rm d,c}V\xhookrightarrow{\rm d,c}H$. 
\begin{remark}
The notation $S\xhookrightarrow{} R$ above means that the inclusion $S \subseteq R$ is continuous. The letter ``$\rm d$'' (respectively ``$\rm c$'') means that, in addition, the inclusion is also dense (respectively compact).  
\end{remark}
\begin{remark}
Under the definition of $A$ in \eqref{LStokes}, it turns out that~$\D(A)=\{u\in H\mid Au\in H\}$ is the domain of~$A$. Indeed, by using the formula 
\begin{align*}
\nabla(\diver u) - \curl(\curl u) = \Delta u,  
\end{align*}
we can verify that if $u \in \D(A)$, $\diver (\Delta u) = 0$ and $(\Delta u \cdot  \nnn) = 0$ on $\p \Omega$. 
\end{remark}

The eigenvalues of $A$,
repeated accordingly with their multiplicity, form an increasing sequence $(\underline\lambda_k)_{k\in\N_0}$,
\[
 0<\underline\lambda_1\le\underline\lambda_2\le\underline\lambda_3\le\underline\lambda_4\le\dots,
\]
with $\underline\lambda_k$ going to $+\infty$ with $k$. 

\medskip
We can rewrite system~\eqref{sys-u-flat} as an evolutionary system
\begin{equation*}
 \dot u+A u+B(u,u)+h=\eta,\quad u(0)=u_0,
\end{equation*}
in the subspace~$H$ of divergence free vector fields
which are tangent to the boundary. We may suppose that~$h$ and~$\eta$ take their values in~$H$ (otherwise we just take their orthogonal projections onto~$H$). Denoting by~$\Pi$ the orthogonal projection in $L^2(\Omega,\,\R^3)$ onto $H$, for~$u,v\in\D(A)$ 
we may write~$Au\coloneqq\Pi(\nu\Delta u)$,
and $B(u,\,v)\coloneqq\Pi(\langle u\cdot\nabla\rangle v)$.

\subsection{Saturating sets and approximate controllability}\label{sS:saturH}

In the pioneering work~\cite{AgraSary05}, the authors introduced a method which led to the controllability of finite-dimensional Galerkin approximations of
the~$\mathrm{2D}$ and~$\mathrm{3D}$ Navier--Stokes system, and to the approximate controllability of the~$\mathrm{2D}$ Navier--Stokes system, by means of \\
low modes/degenerate forcing.

Hereafter~$U\subseteq H$ will stand for a linear subspace of~$H$, and we denote
\[
 \BB(a,b)\coloneqq B(a,b)+B(b,a), \qquad \text{for} \quad (a,~b) \in U \times U. 
\]

\begin{definition}\label{D:FF-l}
Let~$\CC=\{W_k\mid k\in\{1,\,2,\,\dots,\,M\}\}$ and let~$E$ be a finite-dimensional space so that~$\CC\subset E\subset U$. The finite-dimensional subspace $\FF_{\tt L}(E)\subset U$ is given by
\[
\FF_{\tt L}(E)\coloneqq E+\linspan\{\BB(a,b)\mid a\in\CC,\,b\in E,\,\mbox{ and } (B(a,a),B(b,b))\in H\times H\}\textstyle\bigcap  U.
\]
\end{definition}

\begin{definition}\label{D:satur-l}
A given finite subset~$\CC=\{W_k\mid k\in\{1,\,2,\,\dots,\,M\}\}
 \subset U$
 is said $({\tt L},U)$-saturating if for the following sequence of subspaces
 ~$\GG^j\subset U$, defined recursively by
 \begin{align*}
  \GG^0&\coloneqq\linspan\CC,\qquad \GG^{j+1}\coloneqq\FF_{\tt L}(\GG^{j}),
\end{align*}
we have that the union~$\mathop{\bigcup}\limits_{j\in\N}\GG^j$ is dense in~$H$.
\end{definition}

In~\cite[Section~4]{AgraSary06} the authors presented an explicit saturating set for the~$\mathrm{2D}$ Navier--Stokes system.
We would like to refer also to
the works~\cite{Romito04,HairerMatt06,EMat01}, where the notion of saturating set was used to
derive ergodicity for the Navier--Stokes system under degenerate stochastic forcing (compare the sequence of subsets $\ZZ_n$ in~\cite[section~4]{HairerMatt06} with the
sequence of subsets $\KK^n$ in~\cite[section~8]{AgraSary05}).

In the pioneering work~\cite{AgraSary05} the set~$U$ in~\eqref{D:satur-l} is taken to be~$\D(A)$, the same is done
in~\cite{AgraSary06,Rod-Sev05,Rod06,Shirikyan06}. Later, in~\cite{Rod-Thesis08,Rod-wmctf07,PhanRod-ecc15}, $U$ is taken as~$V$ in order to deal either with Navier-type boundary
conditions or with internal controls supported in a small subset.

In~\cite{Shirikyan06}, the method introduced in~\cite{AgraSary05} was developed in the case where the well-posedness of the Cauchy problem is not known. Though the author focuses on no-slip boundary conditions, i.e. $u\rest{\p\Omega}=0$, the results also hold for other boundary conditions. The author considered the case of periodic boundary conditions, and presented an explicit saturating set~$\CC$ (for the case of~$(1,1,1)$-periodic vectors). This saturating set consisted of ~$64$ eigenfunctions of the Stokes operator (i.e., the Laplacian). For a general period~$q=(q_1,q_2,q_3)\in\R_0^3$ the existence of a saturating set was also proven in~\cite[Section 2.3, Theorem~2.5]{Shirikyan06} even though the form of the saturating set was less explicit.

In \cite{PhanRod18JDCS}, the approximate controllability also follows from the existence of a~$({\tt L},\D(A))$-saturating set. For any given length triplet~$L=(L_1,L_2,L_3)$ of a {\rm 3D} rectangle, we presented an explicit $({\tt L},\D(A))$-saturating set~$\CC$ for the~$\mathrm{3D}$ rectangle~$\Omega=(0,L_1)\times(0,L_2)\times(0,L_3)$ (which will be recalled below). The elements of~$\CC_{\rm R}$ are~$81$ eigenfunctions of the Stokes operator under Lions boundary conditions. 

In various works of this topic, to tackle different types of boundary conditions as well as domains, some different definitions of saturating set has been proposed. Here we follow the definition of saturating set as in the previous work (see \cite{PhanRod18JDCS}) because it leads to some advantages in computations.

For further results concerning the controllability and approximate controllability of Navier--Stokes (and also other) systems by a control with low finite-dimensional range (independent of the viscosity coefficient) in several domains (including the $\mathrm{2D}$ Sphere and Hemisphere) we refer the readers to~\cite{Nersisyan10,Nersisyan11,Nersesyan10,Shirikyan08,Shirikyan_Evian07,Shirikyan07,Sarychev12,AgraSary06,AgraSary08,AgrKukSarShir07}.
We also mention Problem~VII raised by A.~Agrachev in~\cite{Agrachev13_arx} where the author inquired about the achievable controllability properties for 
controls taking values in a saturating set whose elements are localized/supported in a small subset~$\omega\subset\Omega$. The existence of such saturating sets is an open
question (except for~$\mathrm{1D}$ Burgers in ~\cite{PhanRod-ecc15}). The controllability properties implied by such saturating set is an open question. There are some negative
results, as for example in the case we consider the~$\mathrm{1D}$ Burgers
equations in~$\Omega=(0,1)$ and take controls in~$L^2(\omega,\R)$, $w\subset\Omega$, the approximate controllability fails to hold. Instead, to drive the system
from one state~$u_0=u(0)$ at time~$t=0$ to another one~$u_T=u(T)$ at time~$t=T$, we may need $T$ to be big enough. Though we do not consider localized controls here, we refer
the reader to the related results in~\cite{FerGue07,Shirikyan17} and references therein. 

\subsection{The main contribution}
We will present an explicit saturating set in the case of three-dimensional cylinder domain. The saturating set  consists of finite number of eigenfunctions of Stokes operator (see Theorem \ref{T:satur3Dcyl} hereafter). The saturating set has 355 elements (or a simpler version with 260 elements in corollary \ref{cor-sat-cyl}). In some particular cases, it may exist other saturating sets with less elements. However, we want to emphasize that our goal is not to find a saturating set with minimal number of elements. In all cases, the existence of a $({\tt L},\D(A))$-saturating set must be independent of the viscosity coefficient $\nu$. In particular, the linear space $\GG^1$, where the control $\eta$ takes its value, does not change with $\nu$. 

To construct a saturating set, we firstly introduce a system of eigenfunctions in Section \ref{subsec:eigfun}. In this type of domain, we have two types of eigenfunctions $ Y^{j(k),k}$ and $ Z^{j(k),k}$. The form of eigenfunctions $ Y^{j(k),k}$ are analogous to the ones in 3D Rectangles. Nevertheless the apperance of another type of eigenfunctions $Z^{j(k),k}$ yields to some difficulties. The construction of all eigenfunctions $ Z^{j(k),k}$ is based on the expression of $\left( Z^{j(k),k} \cdot \nabla   \right) Y^{j(m),m} + \left( Y^{j(m),m} \cdot \nabla   \right) Z^{j(k),k}$ (see Section \ref{subsec_YkZm}). To construct the eigenfunctions $ Z^{j(k),k}$, Lemma \ref{lemma-lin-indp_Z} (a similar version of Lemma 3.1 in \cite{PhanRod18JDCS}) is not enough to prove the linear independence. Therefore another Lemma \ref{lemma-lin-indp2} will be introduced and used mostly in the proof. The Lemma \ref{lemma-lin-indp2} is a fruitful tool to prove the linear independence property in most of the cases (for example in Step \ref{Step352}, equation \eqref{eq-lem-fail}) where Lemma \ref{lemma-lin-indp_Z} fails to prove the property. Another remark comes from the difference of boundary condition in the third direction from other directions. 
We notice that the procedure can be applied analogously in the first two directions because we consider Lions boundary conditions in the first two directions (see the proof in Section \ref{subsec-proof.genR12}). However, the third direction must be addressed separately in Section \ref{subsec-proof.genR3} because we consider the periodicity assumption in the third direction. In conclusion we believe that the proof in the case of ${\rm 3D}$ Cylinder is inspired from the case of ${\rm 3D}$ Rectangle but it cannot follow line by line.

\bigskip
The rest of the paper is organized as follows. In Section \ref{sec-pre}, we recall some results of the approximate controllability for 3D Navier-Stokes equations under Lions boundary conditions. An explicit saturating set in the case of three-dimensional Rectangle will be revisited in Section~\ref{sec: Recall3DRect}. In Section~\ref{sec: Sat3DCyl}, we construct a $({\tt L},\D(A))$-saturating set in the case of three-dimensional cylinders. The proof of the main Theorem \ref{T:satur3Dcyl} needs more complicated computations. The core ideas of the proof will be presented immediately. More detailed computations will be found in Section \ref{sec: ProofSat3DCyl}.

\section{Preliminaries} \label{sec-pre}

\subsection{Approximate controllability}
Hereafter $u_0\in V$, ~$h\in L^2_{\rm loc}(\R_0,H)$, and~$E\subset\D(A)$ is a finite-dimensional subspace.
Let us consider the system
\begin{equation}\label{sys_u_E}
 \dot u+A u+B(u,u)+h=\eta,\quad u(0)=u_0, 
\end{equation}
where the control~$\eta$ takes its values in~$E$.

For simplicity we will denote
\[
 I_T\coloneqq(0,T),\quad\mbox{and}\quad \overline{I_T}\coloneqq [0,T],\qquad T>0.
\]

\begin{definition}
 Let $T$ be a positive constant. System~\eqref{sys_u_E} is said to be $E$-approximately controllable in time~$T$
 if for any~$\e > 0$ and any pair~$(u_0,\hat u)\in V\times \D(A)$, there exists a control function
$\eta\in L^\infty(I_T, E)$ and a corresponding solution\\ $u\in C(\overline{I_T},V)\bigcap L^2(I_T, \D(A))$, such that
$\norm{u(T)-\hat u}{V} <\e$.
\end{definition}

We recall the Main Theorem in \cite{PhanRod18JDCS} which shows the approximate controllability of 3D Navier-Stokes system from the existence of a $({\tt L},\D(A))$-saturating set. 
\begin{theorem}
Let $(u_0,\hat u)\in V\times V$, ~$\varepsilon>0$, and~$T > 0$. If~$\CC$ is a $({\tt L},\D(A))$-saturating set,
then we can find a control~$\eta\in L^\infty((0,T),\GG^1)$ so that the solution of system~\eqref{sys_u_E} satisfies
$\norm{u(T)-\hat u}{V}<\varepsilon$.
\end{theorem}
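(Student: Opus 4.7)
The plan is to follow the Agrachev--Sarychev methodology developed in \cite{AgraSary05,AgraSary06,Shirikyan06} and adapted to the Lions boundary setting in \cite{PhanRod18JDCS}. The strategy is to establish $\GG^j$-approximate controllability for every $j\ge 1$ by reverse induction on $j$, anchored by a density base at a large index $N$. Because the constraint on where the control takes values is tightest at $j=1$, the induction moves downward from large $N$ to $1$, each step being an application of a convexification (or extension) principle that replaces bracket-type forcing by rapidly oscillating $\GG^j$-controls.

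\textbf{Step 1 (Base case).} Since $\CC$ is a $({\tt L},\D(A))$-saturating set, the union $\bigcup_{j\in\N}\GG^j$ is dense in $H$. Given the target $\hat u\in V$ and tolerance $\varepsilon>0$, I would first exploit parabolic smoothing of the Navier--Stokes flow to reduce the problem to steering $u(T-\delta)$ close to $\hat u$ in the $H$-norm for a small $\delta>0$, then upgrade to the $V$-norm by standard energy estimates on the short terminal interval $[T-\delta,T]$. Choosing $N$ large enough so that the $H$-orthogonal projection of $\hat u$ onto a suitable subspace of $\GG^N$ lies $H$-close to $\hat u$, classical Galerkin-type arguments yield $\GG^N$-approximate controllability toward that projection.

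\textbf{Step 2 (Inductive descent).} The heart of the proof is the implication: if $\GG^{j+1}$-approximate controllability holds, then so does $\GG^j$-approximate controllability. By Definition~\ref{D:FF-l}, any $w\in\GG^{j+1}=\FF_{\tt L}(\GG^j)$ decomposes as
\[
 w=w_0+\sum_{k=1}^M c_k\,\BB(a_k,b_k),\qquad a_k\in\CC,\;b_k\in\GG^j,\;w_0\in\GG^j,
\]
with the compatibility $B(a_k,a_k),B(b_k,b_k)\in H$. Given a $\GG^{j+1}$-valued control $\eta^{j+1}$ and its solution $u^{j+1}$, I would construct $\GG^j$-valued approximating controls by substituting each bracket term $c_k\,\BB(a_k,b_k)$ with a rapidly oscillating $\GG^j$-valued perturbation, schematically of the form $\delta^{-1/2}\zeta(t/\delta)\,a_k+b_k$ on thin time intervals. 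Time-averaging then produces (in the limit $\delta\to 0$) the cross term $\BB(a_k,b_k)=B(a_k,b_k)+B(b_k,a_k)$, while the self-interaction $B(a_k,a_k)$ is absorbed precisely because $B(a_k,a_k)\in H$ and can be subtracted by an adjustment of $w_0$.

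\textbf{Main obstacle.} The delicate point is convergence $u^j(T)\to u^{j+1}(T)$ in the $V$-norm rather than only in $H$, since the $\mathrm{3D}$ Navier--Stokes nonlinearity $B(u,u)$ is not a priori controlled in $V$. This requires energy estimates uniform in the oscillation parameter $\delta$, and crucially uses that the $\delta^{-1/2}$-amplitude $\GG^j$-controls yield bounded $H$-contributions thanks to the condition $(B(a,a),B(b,b))\in H\times H$ encoded in Definition~\ref{D:FF-l}. Once this uniform bound is in place, iterating the descent $N\to N-1\to\cdots\to 1$ finitely many times produces a control $\eta\in L^\infty(I_T,\GG^1)$ with $\norm{u(T)-\hat u}{V}<\varepsilon$, completing the proof.
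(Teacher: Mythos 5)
You have sketched the Agrachev--Sarychev descent that underlies this theorem; note that the present paper does not prove the statement itself but recalls it from \cite{PhanRod18JDCS}, so the comparison is with that argument, whose architecture (base case with controls in a large $\GG^N$, then reduction $\GG^{j+1}\to\GG^j$ by fast oscillations) you reproduce correctly in outline. The genuine gap is in your treatment of the self-interaction terms, which is exactly the crux of this particular $({\tt L},\D(A))$-formulation. In the descent step the oscillation $\delta^{-1/2}\zeta(t/\delta)a_k$ produces, after averaging, not only the cross term $\BB(a_k,b_k)$ but also $B(a_k,a_k)$ weighted by the mean of $\zeta^2$, which is positive; this term cannot be cancelled by oscillation and can only be removed if it lies in the range of the admissible controls at that stage, i.e.\ in $\GG^j$. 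Your justification --- that it ``can be subtracted by an adjustment of $w_0$'' because $B(a_k,a_k)\in H$ --- is insufficient: $w_0$ must remain $\GG^j$-valued, and membership in $H$ says nothing about membership in $\GG^j$. What actually saves the argument is that $B(a_k,a_k)=\tfrac12\BB(a_k,a_k)$ is itself one of the generators of $\GG^1=\FF_{\tt L}(\GG^0)$ (take $b=a_k\in\CC$ in Definition~\ref{D:FF-l}), hence belongs to $\GG^j$ for every $j\ge1$; the $H$-membership condition in Definition~\ref{D:FF-l} is only a compatibility/regularity requirement. This is precisely why the descent must stop at $\GG^1$ and why the theorem gives $\eta\in L^\infty((0,T),\GG^1)$ rather than $\GG^0$: as the final remarks of the paper point out, $B(Y^k,Y^k)\neq0$ in the 3D case. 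If your reasoning were correct as written, nothing would prevent descending to $\GG^0$, which is false in this setting.

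A second weak point is the base case and the norm in which closeness is obtained. Steering $u(T-\delta)$ close to $\hat u$ in the $H$-norm and then invoking parabolic smoothing on $[T-\delta,T]$ does not yield $\norm{u(T)-\hat u}{V}<\varepsilon$: regularization improves the solution's smoothness but does not make $u(T)$ close to the fixed target $\hat u$ in $V$, and for the 3D system one must in any case guarantee a solution in $C(\overline{I_T},V)\bigcap L^2(I_T,\D(A))$, which plain $H$-energy estimates do not provide. In the cited proof the base case and every reduction step are carried out with perturbation estimates for strong solutions directly in the $V$-norm (equivalently, one can pass from $H$-closeness to $V$-closeness only with uniform $\D(A)$-bounds and interpolation, which must then be established uniformly in the oscillation parameter). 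So the uniform-in-$\delta$ estimates you flag as the ``main obstacle'' must be proved at the level of $V$ (and $L^2(\D(A))$) from the start, not recovered a posteriori from an $H$-approximation.
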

\begin{remark}
In \cite{Shirikyan06}, the author introduced a definition of $(\rm B, D(A))$ saturating set and proved that the existence of a $(\rm B, D(A))$ saturating set implies the approximate controllability of the 3D Navier-Stokes systems, at time $T>0$. In \cite{PhanRod18JDCS} and this work, we are using another definition of saturating set, so-called $(\rm L, D(A))$ saturating set. The main advantage to use this definition is in the computation below. 
\end{remark}

\subsection{An explicit saturating set in 3D Rectangles}
\label{sec: Recall3DRect} In this section, we recall a $({\tt L},\D(A))$-saturating set containing a finite number of suitable eigenfunctions of the Stokes operator~$A$ in the $\mathrm{3D}$ rectangle
\[
R=(0,L_1) \times (0,L_2) \times(0,L_3)
\]
under Lions boundary conditions (see~\eqref{LStokes}), where $L_1$, $L_2$, and~$L_3$ are positive numbers. 

For a given $k \in \N^3$, let $\#_0(k)$ stand for the number of vanishing components of $k$. The index $j(k) \in \N$ is such that $1 \le j(k) \le 2 - \#_0(k)$. For example, if $k = (2,1,0)$, $\#_0(k) = 1$ and $j(k) \in \{ 1 \}$; or if  $k = (2,1,2)$, $\#_0(k) = 0$ and $j(k) \in \{ 1,2 \}$. 
\begin{definition}
For $x, y \in \R^3\setminus\{(0,0,0)\}$, the scalar product $(\cdot, \cdot)_{[L]}$ is defined as
\begin{align}
(x,\,y)_{[L]} &\coloneqq \frac{ x_1 y_1}{L_1} + \frac{ x_2 y_2}{L_2} + \frac{x_3 y_3}{L_3}.
\end{align}
For $k \in \N^3$, we denote the orthogonal space $\{k\}^{\perp_{[L]}}_0$ as follows
\begin{align}
\{k\}^{\perp_{[L]}}_0&\coloneqq\{z\in\R^3\setminus\{(0,0,0)\}\mid \left( z,\, k \right)_{[L]} =0,
\mbox{ and }z_i=0\mbox{ if }k_i=0\}.
\end{align}

\end{definition}

A complete system of eigenfunctions $\left\{  Y_{k}  \right\} $ is given by
\begin{subequations}\label{FamilyEig3DRect}
\begin{equation} \label{FamilyEig3DRect.Y}
 Y^{j\left(k\right),k} \coloneqq \begin{pmatrix}
w_1^{{j\left(k\right),k}} \sin \left( \frac{k_1 \pi x_1}{L_1} \right) \cos \left( \frac{k_2 \pi x_2}{L_2} \right) \cos \left( \frac{ k_3 \pi x_3}{L_3} \right)\\
w_2^{{j\left(k\right),k}} \cos \left( \frac{k_1 \pi x_1}{L_1} \right) \sin \left( \frac{k_2 \pi x_2}{L_2} \right) \cos \left( \frac{k_3 \pi x_3}{L_3} \right)\\ 
w_3^{{j\left(k\right),k}} \cos \left( \frac{k_1 \pi x_1}{L_1} \right) \cos \left( \frac{k_2 \pi x_2}{L_2} \right) \sin \left( \frac{k_3 \pi x_3}{L_3} \right)
 \end{pmatrix},\quad \#_0(k)\le1,
 \end{equation}
 with
 \begin{equation} 
\{ w ^{j(k),k}\mid j(k)\in\{1,2-\#_0(k)\}\}\subset\{k\}^{\perp_{[L]}}_0
\end{equation}
a linearly independent and orthogonal family.
\end{subequations}

Notice that $2 - \#_0(k)$ is the dimension of the subspace $\{ k \}^{\perp_{[L]}}_0$ and that the orthogonality of the
family~$\{ w ^{j(k),k}\mid j(k)\in\{1,2-\#_0(k)\}\}$ implies that 
the family in~\eqref{FamilyEig3DRect.Y} is also orthogonal. 

We recall the result in \cite[Theorem 3.1]{PhanRod18JDCS} about saturating set in 3D rectangles 

\begin{theorem}\label{T:satur3Drect}
The set~$~\CC_{\rm Y}  \coloneqq\left\{ Y^{j(n),n} \left|\begin{array}{l}n\in\N^3,\quad 0 \le n_i \le 3,\\ \#_0(n) \le 1,\quad j(n) \in \{ 1, 2-\#_0(n) \}\end{array} \right. \right\}$
is $({\tt L},\D(A))$-saturating. 
\end{theorem}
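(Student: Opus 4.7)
\medskip\noindent\textbf{Proof proposal.}
The plan is to reduce the claim to showing, by induction on the $\ell^\infty$-size $|n|_\infty=\max_i n_i$, that every eigenfunction $Y^{j(n),n}$ with $n\in\N^3$ and $\#_0(n)\le 1$ lies in some iterate $\GG^j$. Because the family $\{Y^{j(n),n}\}$ forms a complete orthogonal system in $H$ (it is exactly the joint spectral basis of $A$ under Lions boundary conditions on a rectangle), producing all of them will give density of $\bigcup_{j\in\N}\GG^j$ in $H$, which is the required saturation property. The base case $|n|_\infty\le 3$ is immediate because those eigenfunctions belong to $\CC_{\rm Y}\subset \GG^0$.

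I would first write out $\BB(Y^{j(k),k},Y^{j(m),m})$ explicitly by expanding the three components $w_i^{j(k),k}\sin/\cos$ and applying elementary product-to-sum formulas. After projecting with $\Pi$ onto $H$, the result is a linear combination of at most eight eigenfunctions whose indices are the sign-shifted triples $(|k_1\pm m_1|,|k_2\pm m_2|,|k_3\pm m_3|)$, with coefficients that are explicit bilinear expressions in the amplitude vectors $w^{j(k),k}, w^{j(m),m}$ and rational functions of the frequencies $k_i/L_i$, $m_i/L_i$. The regularity side-condition $B(a,a),B(b,b)\in H$ in Definition~\ref{D:FF-l} is then automatic, since eigenfunctions of $A$ sit in $\D(A)$ and the bilinear map $B$ maps $\D(A)\times\D(A)$ into $H$.

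For the inductive step, fix a target $n$ with $|n|_\infty\ge 4$, choose a coordinate $i^\star$ with $n_{i^\star}\ge 4$, and write $n=k^\star+m^\star$ with $k^\star\in\{0,\dots,3\}^3$ and $m^\star$ strictly smaller than $n$ in the lexicographic order induced by $|\cdot|_\infty$. By induction the eigenfunction(s) $Y^{\cdot,m^\star}$ belong to some $\GG^{j-1}$, while $Y^{j(k^\star),k^\star}\in\CC_{\rm Y}$; computing $\BB(Y^{j(k^\star),k^\star},Y^{j(m^\star),m^\star})\in\GG^j$ then produces the desired $Y^{\cdot,n}$ together with up to seven parasitic terms whose indices are other sign combinations $(|k^\star_i\pm m^\star_i|)_i$. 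Each parasitic index either lies inside the base box $\{0,\dots,3\}^3$ or is strictly smaller than $n$ in $|\cdot|_\infty$, so it is already available and can be subtracted off. When $\#_0(n)=0$ one has $\dim\{n\}_0^{\perp_{[L]}}=2$ and the procedure must be run with two non-collinear auxiliary pairs $(k^\star,w^{j(k^\star),k^\star})$ in order to recover both independent amplitude vectors $w^{1,n}$ and $w^{2,n}$; when $\#_0(n)=1$ a single bracket suffices.

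The main obstacle I anticipate is the non-degeneracy check: at each extraction step one must show that the $1\times 1$ or $2\times 2$ linear system relating the target coefficients to the unknown amplitudes $w^{j(n),n}$ is invertible, i.e.\ that $\BB$ really produces a nonzero component on the desired $Y^{\cdot,n}$. This reduces to verifying that certain scalar expressions built from $(w^{j(k^\star),k^\star},m^\star)_{[L]}$-type pairings and from cross products do not identically vanish. A finite case analysis, split according to $\#_0(n)\in\{0,1\}$, according to which coordinate is being incremented, and according to whether $k^\star-m^\star$ collides with $k^\star+m^\star$ componentwise, should cover every configuration; the two-dimensional freedom in the choice of $w^{j(k^\star),k^\star}$ inside $\{k^\star\}_0^{\perp_{[L]}}$ when $\#_0(k^\star)=0$ is precisely what provides enough room to avoid singular cases. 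Once non-degeneracy is checked uniformly, the induction closes and density in $H$ follows from completeness of the trigonometric eigenbasis.
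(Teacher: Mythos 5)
Your outline does follow the same strategy as the proof this paper cites from \cite{PhanRod18JDCS} (and mirrors in Section~\ref{sec: ProofSat3DCyl} for the cylinder): induction on the box size $q=|n|_\infty$, generation of new modes by applying $\BB$ to one factor taken from the base box and one previously generated factor, elimination of the parasitic sign-shifted terms, and a final non-degeneracy check. The problem is that the non-degeneracy check is precisely where the content of the theorem lies, and your proposal leaves it at the level of ``a finite case analysis should cover every configuration.'' It is not routine: the correct criterion is not invertibility of a $2\times2$ system in the unknown amplitudes, but the statement (the $Y$-analogue of Lemma~\ref{lemma-lin-indp_Z}) that the projections of two generated fields span the full two-dimensional eigenspace attached to $n$ if and only if the two coefficient vectors \emph{together with $n$} form a linearly independent triple in $\R^3$; and for perfectly natural choices of $(k,m,w^k,w^m)$ the corresponding determinant does vanish for special length ratios $L_1\!:\!L_2\!:\!L_3$ and special values of $q$ --- exactly the phenomenon recorded in \eqref{eq-lem-fail}, which in the cylinder case forces the introduction of Lemma~\ref{lemma-lin-indp2} and a third generating pair. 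So the claim that the two-dimensional freedom in $w^{j(k^\star),k^\star}$ ``provides enough room to avoid singular cases'' is a hope, not an argument: the actual proof must exhibit explicit quadruples $(k,m,w^k,w^m)$ and verify, uniformly in $L$ and in $q\ge4$, that at least one of the resulting determinants is nonzero.

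A second, smaller gap is the ordering of the induction. If $k^\star_i=0$ for some coordinate, the parasitic indices $|k^\star\pm m^\star|$ keep the $i$-th coordinate of $n$, so they are \emph{not} strictly smaller in $|\cdot|_\infty$, and a bare lexicographic refinement does not by itself guarantee that they are already available; moreover some sign combinations produce indices with $\#_0\ge2$, which do not correspond to eigenfunctions of the family \eqref{FamilyEig3DRect.Y} and must be argued to disappear under $\Pi$ or be handled separately. This is why the cited proof does not sweep lexicographically but organizes the step $q\to q+1$ face by face: first the lines of the new faces $\RR^{q+1}_m$ with a small coordinate, then a two-dimensional sub-induction across each face, then the edges $\LL^{q+1}_{m_1,m_2}$ and finally the corner $(q+1,q+1,q+1)$, with the induction hypotheses \eqref{IH.C}-type statements quoted at each stage. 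Your proposal would be complete once you (i) state and prove the spanning criterion in terms of linear independence of $\{z_\alpha,z_\gamma,n\}$, (ii) make the generation order explicit so that every parasitic term is covered by an earlier step, and (iii) carry out the determinant computations for concrete choices, allowing for a third generator where two choices degenerate.
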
 
The theorem plays a remarkable role to prove the saturating set in 3D-cylinder case in Section \ref{sec: Sat3DCyl}. Particularly, the inclusion \eqref{Sat_Inc_CY} will be obtained directly from this theorem. 

\section{A saturating set in the 3D-cylinder case}
\label{sec: Sat3DCyl}
\subsection{A system of eigenfunctions} \label{subsec:eigfun}
The system of eigenfunctions in the 3D-cylinder case consists two types of eigenfunctions $Y^{j\left(k\right),k}$ and $ Z^{j\left(k\right),k}$. The first type of eigenfunctions was defined before in \eqref{FamilyEig3DRect.Y}. Another type of eigenfunctions $Z^{j\left(k\right),k}$ is defined as follows 
\begin{align} \label{FamilyEig3DCylT2}
  Z^{j\left(k\right),k} &= \begin{pmatrix}
w_1^{{j\left(k\right),k}} \sin \left( \frac{k_1 \pi x_1}{L_1} \right) \cos \left( \frac{k_2 \pi x_2}{L_2} \right) \sin \left( \frac{k_3 \pi x_3}{L_3} \right)\\
w_2^{{j\left(k\right),k}} \cos \left( \frac{k_1 \pi x_1}{L_1} \right) \sin \left( \frac{k_2 \pi x_2}{L_2} \right) \sin \left( \frac{k_3 \pi x_3}{L_3} \right)\\ 
-w_3^{{j\left(k\right),k}} \cos \left( \frac{k_1 \pi x_1}{L_1} \right) \cos \left( \frac{k_2 \pi x_2}{L_2} \right) \cos \left( \frac{k_3 \pi x_3}{L_3} \right)
 \end{pmatrix},
\end{align}
either with $\#_0(k)\le1$, or with $\#_0(k)=2$ and $k_3=0$, or with $\#_0(k)=3$. 

Furthermore we assume that the vectors $w ^{j(k),k}$ are chosen satisfying
 \begin{itemize}
 \item if~$\#_0(k)= 3$, $w ^{j(k),k} = (0,0,-1)$. Then $Z^{(0,0,0)} = (0,0,1)^\top$.  
 \item if~$\#_0(k)= 2$ and~$k_3=0$, $w ^{j(k),k}=w ^{1,k}=(0,0,w ^{1,k}_3)$, with~$w ^{1,k}_3\ne0$,
  \item if~$\#_0(k)\le 1$, then
$ w ^{j(k),k} \in \{k\}^{\perp_{[L]}}_0. 
$ 
\end{itemize}
\begin{remark}
Even though the domain~$\Omega$ is different, namely~$x\in R=(0,L_1) \times (0,L_2) \times(0,L_3)$ and
$x\in\Omega \sim(0,L_1) \times (0,L_2) \times(0,2L_3)$, the eigenfunctions $Y^{j\left(k\right),k}$ are the same. 
\end{remark}

These functions of the forms \eqref{FamilyEig3DRect.Y} and \eqref{FamilyEig3DCylT2} are eigenfunctions of the shifted Stokes operator~$A$ in~$\Omega$ under the boundary conditions \eqref{eq-BoundCond}. Indeed, it is clear that they are eigenfunctions of the usual Laplacian operator. So it remains to check that they are divergence-free and satisfy the boundary conditions \eqref{eq-BoundCond}.

The divergence free condition follows from the choices of the vectors~$w^{j(k),k}$. It is also clear that $u\cdot\nnn$ vanishes at the boundary~$\p\Omega$. Finally we can see that the $\curl$ is normal to the boundary, from the expressions
\begin{align*}
\curl Y ^{j(k),k}
&= -\pi\begin{pmatrix}\left(\textstyle\frac{k_2}{L_2} w ^{j(k),k}_3-\textstyle\frac{k_3}{L_3} w ^{j(k),k}_2\right)
\cos(\textstyle\frac{k_1\pi x_1}{L_1})\sin(\textstyle\frac{k_2\pi x_2}{L_2})\sin(\textstyle\frac{k_3\pi x_3}{L_3})\\
\left(\textstyle\frac{k_3}{L_3} w ^{j(k),k}_1-\textstyle\frac{k_1}{L_1} w ^{j(k),k}_3\right)
\sin(\textstyle\frac{k_1\pi x_1}{L_1})\cos(\textstyle\frac{k_2\pi x_2}{L_2})\sin(\textstyle\frac{k_3\pi x_3}{L_3})\\
\left(\textstyle\frac{k_1}{L_1} w ^{j(k),k}_2-\textstyle\frac{k_2}{L_2} w ^{j(k),k}_1\right)
\sin(\textstyle\frac{k_1\pi x_1}{L_1})\sin(\textstyle\frac{k_2\pi x_2}{L_2})\cos(\textstyle\frac{k_3\pi x_3}{L_3})
\end{pmatrix},\\
\curl Z^{j(k),k} &= \pi \begin{pmatrix}
\left( \frac{k_2}{L_2}  w_3^{{j\left(k\right),k}} -  \frac{k_3}{L_3}  w_2^{{j\left(k\right),k}} \right)  \cos \left( \frac{k_1 \pi x_1}{L_1} \right) \sin \left( \frac{k_2 \pi x_2}{L_2} \right) \cos \left( \frac{k_3 \pi x_3}{L_3}    \right) \\
\left( \frac{k_3}{L_3}  w_1^{{j\left(k\right),k}} -  \frac{k_1}{L_1}  w_3^{{j\left(k\right),k}} \right)  \sin \left( \frac{k_1 \pi x_1}{L_1} \right) \cos \left( \frac{k_2 \pi x_2}{L_2} \right) \cos \left( \frac{k_3 \pi x_3}{L_3}    \right)  \\
\left( \frac{k_2}{L_2}  w_1^{{j\left(k\right),k}} -  \frac{k_1}{L_1}  w_2^{{j\left(k\right),k}} \right)  \sin \left( \frac{k_1 \pi x_1}{L_1} \right) \sin \left( \frac{k_2 \pi x_2}{L_2} \right) \sin \left( \frac{k_3 \pi x_3}{L_3}    \right) 
\end{pmatrix},
\end{align*}
which we can derive by direct computations. For example, at the lateral boundary~$x_1=L_1$, that is,
for $x\in \{L_1\} \times (0,L_2)\times\frac{2L_3}{2 \pi} \mathbb{S}^1$, we have $\nnn=(1,0,0)$ and
\begin{align*}
\curl Y ^{j(k),k}
&= -\pi\begin{pmatrix}\left(\textstyle\frac{k_2}{L_2} w ^{j(k),k}_3-\textstyle\frac{k_3}{L_3} w ^{j(k),k}_2\right)
\sin(\textstyle\frac{k_2\pi x_2}{L_2})\sin(\textstyle\frac{k_3\pi x_3}{L_3})\\
0\\
0
\end{pmatrix},\\
\curl Z^{j(k),k} &= \pi \begin{pmatrix}
\left( \frac{k_2}{L_2}  w_3^{{j\left(k\right),k}} -  \frac{k_3}{L_3}  w_2^{{j\left(k\right),k}} \right)  \sin \left( \frac{k_2 \pi x_2}{L_2} \right) \cos \left( \frac{k_3 \pi x_3}{L_3}    \right) \\
0  \\
0
\end{pmatrix},
\end{align*}
which show that~$\curl Y ^{j(k),k}$ and~$\curl Z^{j(k),k}$ have the same direction as the normal vector~$\nnn$.

\begin{lemma}
The system of eigenfunctions
\begin{align*}
&\{ Y^{j(k), k},~ Z^{j(k),k} \mid k\in \N^3\mbox{ and }\#_0(k) \le 1 \}\\
& {\textstyle\bigcup} \{Z^{1,k} \mid
k \in \N^3,~\#_0(k)=2,\mbox{ and }k_3=0\}  
~ {\textstyle\bigcup} \left\{Z^{(0,0,0)} =(0,0,1)^\top \right \}
\end{align*}
is complete. 
\end{lemma}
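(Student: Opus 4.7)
The Stokes operator $A$ under Lions boundary conditions is self-adjoint on $H$ with compact resolvent, so its eigenfunctions form an orthogonal Hilbert basis of $H$. Since the listed functions have already been verified to be divergence-free eigenfunctions of $A$, the lemma reduces to a dimension count: at each admissible frequency vector $k$, the listed functions at that frequency must exhaust the divergence-free eigenspace there.

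The approach is separation of variables. The product structure $\Omega = (0,L_1) \times (0,L_2) \times \frac{L_3}{\pi}\mathbb{S}^1$ decouples the scalar Laplacian eigenvalue problem, under Lions boundary conditions in $x_1, x_2$ and periodicity in $x_3$, into three Sturm--Liouville problems. The componentwise parity patterns are forced by the boundary condition $u \cdot \nnn = 0$ together with the tangential-curl part of the Lions condition: $u_1$ expands in $\sin(k_1 \pi x_1/L_1)\cos(k_2 \pi x_2/L_2)$ times $\cos$ or $\sin$ of $k_3 \pi x_3/L_3$, and cyclically for $u_2, u_3$. Hence, for each fixed $k \in \N^3$, the frequency-$k$ subspace of $L^2(\Omega, \R^3)$ splits into two $x_3$-parity sectors, which are precisely the $Y$-type and $Z$-type amplitude spaces. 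Within each sector, the divergence-free constraint translates into the single linear equation $(w, k)_{[L]} = 0$ on the amplitude vector $w$.

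It then suffices to check that the vectors $w^{j(k), k}$ from the construction form a basis of this constrained amplitude space, once one accounts for the identical vanishing of $\sin(0)$ factors when some $k_i = 0$. A case analysis over $\#_0(k) \in \{0, 1, 2, 3\}$, further subdivided by whether $k_3 = 0$ when $\#_0(k) \in \{1, 2\}$, shows in each case that the count matches the dimension of the constrained sector. Summing over $k$ and invoking density of trigonometric polynomials in $L^2(\Omega, \R^3)$ then yields completeness in $H$. The main obstacle is precisely this case analysis when $\#_0(k) \geq 1$: one must carefully reconcile the vanishing of $\sin(0)$-factors with the condition $w \in \{k\}^{\perp_{[L]}}_0$, so as to confirm that the primary $Y^{j(k), k}$, $Z^{j(k), k}$ lists, augmented by $\{ Z^{1, k} \mid \#_0(k) = 2,\, k_3 = 0 \}$ and by $Z^{(0, 0, 0)}$, genuinely span every divergence-free Fourier mode.
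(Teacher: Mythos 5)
Your outline is essentially the paper's own (sketched) argument: the paper likewise reduces completeness to the one-dimensional completeness of the sine and cosine systems on $(0,L_i)$ and of their union in the periodic direction, and then relies on exactly the componentwise Fourier expansion, per-frequency divergence constraint $(w,k)_{[L]}=0$, and degenerate-case bookkeeping for $\#_0(k)\ge 2$ that you describe (the paper delegates these details to the arguments of an earlier reference). The spectral-theorem framing in your first paragraph is dispensable—the trigonometric expansion of an arbitrary element of $H$ needs only the 1D completeness, not the Lions boundary condition—but the working part of your plan coincides with the paper's route.
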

\begin{proof}
Recalling that, for~$r>0$,
\[
 \{\sin(\textstyle\frac{k\pi x_1}{r})\mid k\in\N_0\}\quad\mbox{and}\quad\{\cos(\textstyle\frac{k\pi x_1}{r})\mid k\in\N\}
\]
are two complete systems in~$L^2((0,r),\R)$. And
\[
 \{\sin(\textstyle\frac{k\pi x_1}{r})\mid k\in\N_0\}\bigcup\{\cos(\textstyle\frac{k\pi x_1}{r})\mid k\in\N\}
\]
is a complete system in~$L^2((0,2r),\R)$. Then the proof can be done by following the arguments in~\cite[Section 6.6]{PhanRod17}. We skip the details.
\end{proof}

Now we can present the saturating set.
\begin{theorem}\label{T:satur3Dcyl}
The set of eigenfuntions~
\begin{align*}
\CC \coloneqq &\left\{ Y^{j(n),n} \mid~n\in\N^3,~\#_0(n)\le1,~n_i\le 4,~j(n) \in \{ 1, 2-\#_0(n) \}\right\} \\
& {\textstyle\bigcup}  \left\{ Z^{j(n),n} \mid~n\in\N^3,~\#_0(n)\le1,~n_i\le {4},~j(n) \in \{ 1, 2-\#_0(n) \}\right\} \\
& {\textstyle\bigcup} \left\{ Z^{j(n),n} \mid~n \in \N^3,~\#_0(n)=2,~n_3=0 \right\} ~ {\textstyle\bigcup}  \left\{Z^{(0,0,0)} =(0,0,1)^\top \right \}
\end{align*}
is $({\tt L},\D(A))$-saturating.  
\end{theorem}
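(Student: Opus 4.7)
The plan is to verify saturation by showing that the iterated spaces $\bigcup_{j\in\N}\GG^j$ capture every eigenfunction listed in the completeness lemma just proved, at which point density in $H$ follows immediately from the fact that $A$ is self-adjoint with compact resolvent, so its eigenfunctions form a Hilbert basis of $H$. Since $\FF_{\tt L}(E)\supset E$ at every step, this reduces to an inductive procedure: for each target eigenfunction I must produce it, modulo already-constructed modes, as the distinguished component of a bracket $\BB(a,b)$ with $a\in\CC$ and $b$ in some $\GG^j$. The two structural ingredients are (i) the explicit formulas for $\BB(Y^{j(k),k},Y^{j(m),m})$, $\BB(Y^{j(k),k},Z^{j(m),m})$ and $\BB(Z^{j(k),k},Z^{j(m),m})$ developed in Section \ref{subsec_YkZm}, each producing eigenfunctions at the sum-and-difference modes $(k_1\pm m_1,k_2\pm m_2,k_3\pm m_3)$, and (ii) the linear-independence Lemmas \ref{lemma-lin-indp_Z} and \ref{lemma-lin-indp2}, which isolate one such mode from these combinations.

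The $Y$-part of the proof is essentially free. By inspection of the $x_3$-parity of the sines and cosines in \eqref{FamilyEig3DRect.Y} and \eqref{FamilyEig3DCylT2}, both $\BB(Y,Y)$ and $\BB(Z,Z)$ lie in the span of $Y$-type modes, while $\BB(Y,Z)$ lies in the span of $Z$-type modes; in particular the $Y$-iteration decouples from the $Z$-iteration. The $Y$-sub-family of $\CC$ is exactly the saturating set of Theorem \ref{T:satur3Drect}, so that theorem delivers the inclusion of every $Y^{j(n),n}$ with $\#_0(n)\le 1$ into $\bigcup_j \GG^j$; this is the inclusion labelled \eqref{Sat_Inc_CY} in the introduction. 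The remaining work is to produce every $Z^{j(n),n}$. Here I would induct on the ``size'' of the multi-index $n$, starting from the base cases $n_i\le 4$ and $\#_0(n)\ge 2$ (and the seed $Z^{(0,0,0)}=(0,0,1)^\top$ for the $n_3=0$ ladder), all of which are prescribed directly in $\CC$. At each step one picks $a=Y^{j(m),m}\in\CC$ with small $m$ and $b$ a $Z$-eigenfunction already in $\GG^j$, arranged so that $\BB(a,b)$ contains $Z^{j(n),n}$ with nonzero coefficient; the companion modes of lower order are already in $\GG^j$ by induction and can be subtracted. The two Lions directions $x_1,x_2$ are treated symmetrically (Section \ref{subsec-proof.genR12}), while the periodic direction $x_3$ is handled separately (Section \ref{subsec-proof.genR3}), reflecting the different parity structure imposed by the boundary conditions.

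The main obstacle is the linear-independence bookkeeping at each inductive step. To extract a single $Z^{j(n),n}$ from a bracket one must solve a small linear system whose matrix depends on the vectors $w^{j(\cdot),\cdot}$ and on the ratios $k_i/L_i$, $m_i/L_i$; for generic mode configurations the nonsingularity is given by Lemma \ref{lemma-lin-indp_Z}. Exceptional configurations, typified by the relation \eqref{eq-lem-fail} arising in Step \ref{Step352}, produce a vanishing coefficient that makes Lemma \ref{lemma-lin-indp_Z} inconclusive; in those cases I would invoke the finer Lemma \ref{lemma-lin-indp2}, which restores the rank by combining the failing bracket with an auxiliary one built from a second seed of $\CC$. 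This is also what forces the size of $\CC$: the extension from $n_i\le 3$ (as in Theorem \ref{T:satur3Drect}) to $n_i\le 4$, together with the full $Z$-family at those modes, is exactly what furnishes the auxiliary seeds that Lemma \ref{lemma-lin-indp2} demands in the degenerate cases, and ensures that the whole construction is independent of the viscosity $\nu$ as required.
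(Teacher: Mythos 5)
Your plan follows the paper's own route: reduce saturation to generating every eigenfunction of the complete system, obtain the $Y$-family from the 3D-rectangle result (Theorem \ref{T:satur3Drect}, giving the inclusion \eqref{Sat_Inc_CY}), and produce the $Z$-family by induction on the maximal mode via mixed brackets $\BB(Y,Z)$ with one factor taken from $\CC$, treating the periodic $x_3$-direction separately from the two Lions directions and invoking Lemma \ref{lemma-lin-indp2} (three brackets) exactly where Lemma \ref{lemma-lin-indp_Z} degenerates, as in \eqref{eq-lem-fail}. This is essentially the paper's proof; what your sketch leaves implicit is only the explicit catalogue of choices $(k,m,w^k,w^m)$ and the determinant verifications carried out case by case in Section \ref{sec: ProofSat3DCyl}.
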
 
\begin{proof}
Firstly, let us denote $\N_4 \coloneqq \{q \in \N \mid q \ge 4 \}$.
We recall the index subsets $\sS^q_{\mathrm R},\,\RR^q_m,\,\LL^q_{m_1,m_2}$ defined in the proof of rectangle case (see \cite[Section 3.4]{PhanRod18JDCS}) for $q \in \N_4 \coloneqq \{q \in \N \mid q \ge 4 \}$.
\begin{equation}\label{setsSC_R}
\begin{split}
\sS^q_{\rm R} &\coloneqq \left\{ n\in\N^3 \mid 0 \le n_i \le q,~ \#_0(n) \le 1 \right\},\\
\CC^q_{\rm R} &\coloneqq \left\{ Y^{j(n),n} \mid~n \in \sS^q_{\rm R},~ j(n) \in \{ 1, 2-\#_0(n) \}  \right\},\\
\RR^q_m &\coloneqq \left\{ n \in \sS^q_{\rm R}  \mid n_m = q,~0 \le n_i \le q-1~\mbox{for }i \neq m \right\},\\
\LL^q_{m_1,m_2} & \coloneqq \left\{ n \in \sS^q_{\rm R}   \mid ~n_{m_1} = q= n_{m_2},~m_1\ne m_2,~0 \le n_i \le q-1,~i \notin \{m_1,m_2\} \right\}.
\end{split}
\end{equation}

Next, we define some new sets
\begin{equation}\label{setsSC_C}
\begin{array}{rcl}
\sS^q_{\rm C} &\coloneqq& \sS^q_{\rm R} \cup \{ (n_1,0,0), (0,n_2,0) \mid 0 < n_1 \le q,~0 < n_2 \le q \},\\
\CC^q_{\rm C} &\coloneqq& \left\{ Y^{j(n),n},\, Z^{j(n),n} \mid~n \in \sS^q_{\rm R},~ j(n) \in \{ 1, 2-\#_0(n) \}  \right\}\\
 & &\bigcup \left\{ Z^{j(n),n} \mid~n \in \sS^q_{\rm C} \setminus  \sS^q_{\rm R},~ j(n) =1 \right\} \bigcup \left\{ Z^{(0,0,0)}\right\}.
 \end{array}\hspace*{-1em}
\end{equation}

We  can see that Theorem~\ref{T:satur3Dcyl} is a corollary of the following inclusions
\begin{equation}\label{Sat_Inc_C}
 \CC^q_{\rm C}\subseteq \GG^{q-1},\qquad \mbox{for all}\quad q\in\N_4.
\end{equation}
Let us decompose~$\CC^q_{\rm C}=\CC^q_{Y}\cup\CC^q_{Z}$ with
\begin{subequations}\label{splitCYZ}
 \begin{align}
\CC^q_{Y}&\coloneqq\left\{ Y^{j(n),n} \mid~n \in \sS^q_{\rm R},~ j(n) \in \{ 1, 2-\#_0(n) \}  \right\}\\
 \CC^q_{Z}&\coloneqq \left\{ Z^{j(n),n} \mid~n \in \sS^q_{\rm R},~ j(n) \in \{ 1, 2-\#_0(n) \}  \right\} \nonumber \\
 &\qquad {\textstyle\bigcup} \left\{ Z^{1,n} \mid
 ~n \in \sS^q_{\rm C}\setminus  \sS^q_{\rm R} \right\}  {\textstyle\bigcup} \left\{ Z^{(0,0,0)}\right\}. 
 \end{align}
\end{subequations}
Inspiring from the proof of Theorem \ref{T:satur3Drect}, we get
\begin{equation}\label{Sat_Inc_CY}
 \CC^q_{Y}\subseteq \GG^{q-1},\qquad \mbox{for all}\quad q\in\N_4.
\end{equation}
So it remains to prove that
\begin{equation}\label{Sat_Inc_CZ}
 \CC^q_{Z}\subseteq \GG^{q-1},\qquad \mbox{for all}\quad q\in\N_4,
\end{equation}
The inclusion \eqref{Sat_Inc_CZ} is proved by induction argument.
\\ \medskip
\textbf{Base step}
By definition we have that $\CC=\CC^4_{\rm C}\supset\CC^4_{Z}$ and~$\linspan\CC=\GG^0$. Therefore
\begin{equation}
 \mbox{Inclusion~\eqref{Sat_Inc_CZ} holds for }q=4. \label{BaseIndStep_C}
\end{equation}
\textbf{Induction Step} The induction hypothesis is
\addtocounter{equation}{1}
\begin{equation}\label{IH.C}
 \mbox{ $\CC^4_{Z}\subseteq \GG^{0}$ and the inclusion $\CC^q_{Z}\subseteq \GG^{q-1}$ holds true for a given $q\in\N_4$.}
\end{equation}
We want to prove that $\CC^{q+1}_{Z}\subseteq \GG^{q}$.

Notice that
\begin{equation} \label{subset-sq-3Dcyl}
 \begin{split}
\sS^{q+1}_{\rm C}  =  &\sS^{q}_{\rm C} \bigcup \left( \RR^{q+1}_1\cup\RR^{q+1}_2\cup\RR^{q+1}_3 \right) \bigcup \{ (q+1, 0, 0),\, (0,q+1, 0) \}\\ 
&\bigcup \left( \LL^{q+1}_{1,2} \cup \LL^{q+1}_{2,3} \cup \LL^{q+1}_{3,1} \right) \bigcup \{ (q+1, q+1, q+1) \}.
\end{split}
\end{equation} 
The decomposition \eqref{subset-sq-3Dcyl} is sketched in Figure \ref{fig_induction}. 

Based on this decomposition \eqref{subset-sq-3Dcyl},we introduce five Lemmas below.
\begin{lemma} \label{lem-genR3}
$Z^{j(n),n} \in \GG^q$ for all $ n \in \RR^{q+1}_3$.
\end{lemma}

\begin{lemma} \label{lem-genR12}
$Z^{j(n),n} \in \GG^q$ for all $ n \in\left( \RR^{q+1}_1\cup\RR^{q+1}_2  \right)$.
\end{lemma}

\begin{lemma} \label{lem-gen00}
$Z^{j(n),n} \in \GG^q$ for all $ n \in \{ (q+1, 0, 0),\, (0,q+1, 0) \}$.
\end{lemma}

\begin{lemma} \label{lem-genL}
$Z^{j(n),n} \in \GG^q$ for all $ n \in \left( \LL^{q+1}_{1,2} \cup \LL^{q+1}_{2,3} \cup \LL^{q+1}_{3,1} \right)$.
\end{lemma}

\begin{lemma} \label{lem-genq+1}
$Z^{\{1,2\},(q+1,q+1,q+1)} \subset \GG^q$.
\end{lemma}

\begin{figure}[ht] \centering 
\includegraphics[width=\textwidth]{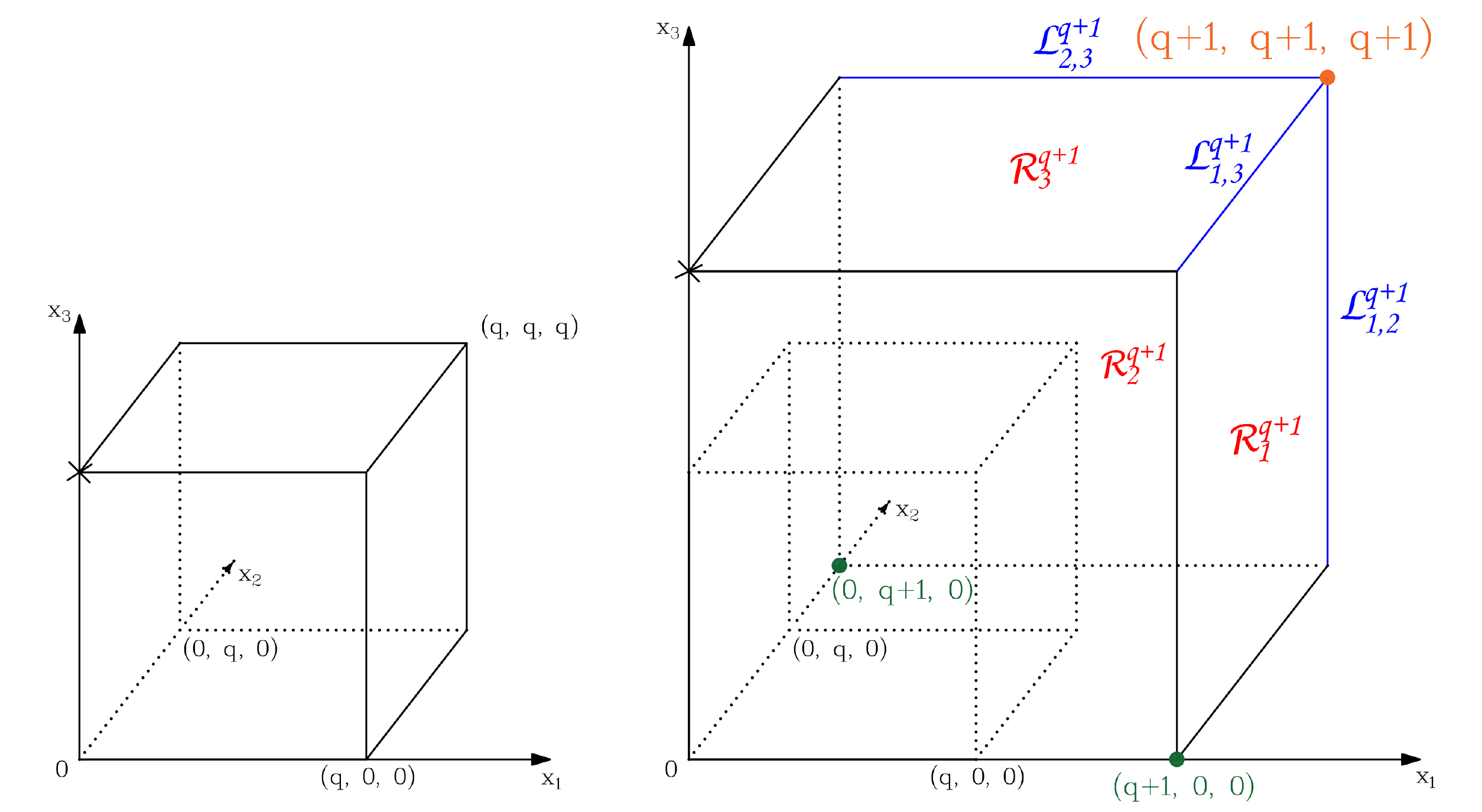}
\caption{Induction Step based on the decomposition \eqref{subset-sq-3Dcyl}.}
\label{fig_induction}
\end{figure}

Under the composition \eqref{subset-sq-3Dcyl}, we will sequentially construct eigenfunctions whose indices belong to each parts of the decomposition. 
The first part containing three rectangles $\RR^{q+1}_1,~\RR^{q+1}_2,~\RR^{q+1}_3$ will be considered in Lemmas \ref{lem-genR3} and \ref{lem-genR12}. We need to consider the third direction separately due to the difference of boundary condition in this direction. The Lemma \ref{lem-gen00} deals with two special eigenfunctions which did not appear in the proof of Theorem 3.1 in \cite{PhanRod18JDCS} (the indices are two green points in Figure \ref{fig_induction}). The Lemma \ref{lem-genL} tackles these eigenfunctions whose indices belong to three blue lines $\LL^{q+1}_{1,2},~\LL^{q+1}_{2,3},~\LL^{q+1}_{3,1}$ in Figure \ref{fig_induction}. The last Lemma \ref{lem-genq+1} will take on two eigenfunctions whose index is $(q+1, q+1, q+1)$ (the orange point in Figure \ref{fig_induction}). 
To prove the Lemmas \ref{lem-genR3}, \ref{lem-genR12}, and \ref{lem-genL}, we will process them by induction. 
The particular proofs of all Lemmas will be presented  in following Section \ref{sec: ProofSat3DCyl}. 

Following all results of Lemmas \ref{lem-genR3}--\ref{lem-genq+1}, we conclude that $\CC^{q+1}_{Z\rm C}\subseteq \GG^{q}$. By induction hypothesis and \eqref{BaseIndStep_C}, we can conclude that the inclusion \eqref{Sat_Inc_CZ} hold true, which implies the statement of Theorem \ref{T:satur3Dcyl}.
\end{proof}

\medskip
To prove Lemmas \ref{lem-genR3}--\ref{lem-genq+1}, we will next introduce some fruitful tools.

\subsection{The expression for $(Y^k \cdot \nabla) Z^m + (Z^m \cdot \nabla) Y^k $.} \label{subsec_YkZm}
Here we will present the expression for the coordinates of $\left( Y^{j(k),k} \cdot \nabla   \right) Z^{j(m),m} + \left( Z^{j(m),m} \cdot \nabla   \right) Y^{j(k),k}$
for given eigenfunctions in~\eqref{FamilyEig3DRect.Y} and \eqref{FamilyEig3DCylT2}. In order to shorten the following expressions and simplify the writing, we will write
\begin{align}Y^{k}=Y^{j\left(k\right),k},\quad Y^{m}=Y^{j\left(m\right),m},\quad  w ^{k}=w ^{j(k),k},\quad  \mbox{and}\quad w ^{m}=w ^{j(m),m}
\end{align}
by neglecting the indices $j(k),j(m)$. We will also denote
\[
 \textstyle {\rm C}_i(k_i)\coloneqq \cos \left( \frac{k_i \pi x_i}{L_i} \right)\quad\mbox{and}\quad {\rm S}_i(k_i)\coloneqq \sin \left( \frac{k_i \pi x_i}{L_i} \right),\qquad \text{for~~} i\in \{1,2,3\}.
\]

Proceeding as in the case of the rectangle (see \cite[Section 3.1]{PhanRod18JDCS}), we can obtain 
\begin{align*}
\left( Y^k \cdot \nabla   \right) Z^m &= \left( \begin{matrix}
Y^k \cdot w^m_1  \left( \ \begin{matrix}
\frac{m_1\pi}{L_1} {\rm C}_1(m_1) {\rm C}_2(m_2) {\rm S}_3(m_3) \\
 -\frac{m_2\pi}{L_2} {\rm S}_1(m_1) {\rm S}_2(m_2) {\rm S}_3(m_3) \\
 \frac{m_3\pi}{L_3} {\rm S}_1(m_1) {\rm C}_2(m_2) {\rm C}_3(m_3)
\end{matrix}   \right) \\
Y^k \cdot w^m_2  \left( \ \begin{matrix}
-\frac{m_1\pi}{L_1} {\rm S}_1(m_1) {\rm S}_2(m_2) {\rm S}_3(m_3) \\
\frac{m_2\pi}{L_2} {\rm C}_1(m_1) {\rm C}_2(m_2) {\rm S}_3(m_3) \\
 \frac{m_3\pi}{L_3} {\rm C}_1(m_1) {\rm S}_2(m_2) {\rm C}_3(m_3)
\end{matrix}   \right)\\
Y^k \cdot w^m_3  \left( \ \begin{matrix}
\frac{m_1\pi}{L_1}  {\rm S}_1(m_1) {\rm C}_2(m_2) {\rm C}_3(m_3) \\
\frac{m_2\pi}{L_2} {\rm C}_1(m_1) {\rm S}_2(m_2) {\rm C}_3(m_3) \\
 \frac{m_3\pi}{L_3}{\rm C}_1(m_1) {\rm C}_2(m_2) {\rm S}_3(m_3)
\end{matrix}   \right)
\end{matrix}
\right), \\
\left( Z^m \cdot \nabla   \right) Y^k &= \left( \begin{matrix}
Z^m \cdot w^k_1  \left( \ \begin{matrix}
\frac{k_1\pi}{L_1} {\rm C}_1(k_1) {\rm C}_2(k_2) {\rm C}_3(k_3) \\
 -\frac{k_2\pi}{L_2} {\rm S}_1(k_1) {\rm S}_2(k_2) {\rm C}_3(k_3) \\
-\frac{k_3\pi}{L_3} {\rm S}_1(k_1) {\rm C}_2(k_2) {\rm S}_3(k_3)
\end{matrix}   \right) \\
Z^m \cdot w^k_2  \left( \ \begin{matrix}
-\frac{k_1\pi}{L_1}  {\rm S}_1(k_1) {\rm S}_2(k_2) {\rm C}_3(k_3) \\
\frac{k_2\pi}{L_2} {\rm C}_1(k_1) {\rm C}_2(k_2) {\rm C}_3(k_3) \\
-\frac{k_3\pi}{L_3} {\rm C}_1(k_1) {\rm S}_2(k_2) {\rm S}_3(k_3)
\end{matrix}   \right)\\
Z^m \cdot w^k_3  \left( \ \begin{matrix}
-\frac{k_1\pi}{L_1}  {\rm S}_1(k_1) {\rm C}_2(k_2) {\rm S}_3(k_3) \\
-\frac{k_2\pi}{L_2} {\rm C}_1(k_1) {\rm S}_2(k_2) {\rm S}_3(k_3) \\
\frac{k_3\pi}{L_3}  {\rm C}_1(k_1) {\rm C}_2(k_2) {\rm C}_3(k_3)
\end{matrix}   \right)
\end{matrix}
\right),
\end{align*} 
We next denote coefficient $\beta^{\star_1\star_2\star_3}_{w^{k},m}$ as follows
\begin{align} \label{BetaStar}
\beta^{\star_1\star_2\star_3}_{w^{k},m} \coloneqq \frac{\pi}{8} \textstyle\left( \star_1~\frac{w_1^{k} m_1}{L_1}~\star_2~\frac{w_2^{k} m_2}{L_2}~\star_3~\frac{w_3^{k}m_3 }{L_3}\right),
\qquad\mbox{for}\quad {(\star_1, \star_2, \star_3) \in \{+,- \}^3.} 
\end{align}
For example, we have 
\begin{align*}
\beta^{+++}_{w^{k},m} \coloneqq \frac{\pi}{8} \textstyle\left( +\frac{w_1^{k} m_1}{L_1} + \frac{w_2^{k} m_2}{L_2} + \frac{w_3^{k}m_3 }{L_3}\right), \quad \beta^{-+-}_{w^{k},m} \coloneqq \frac{\pi}{8} \textstyle\left( -\frac{w_1^{k} m_1}{L_1} + \frac{w_2^{k} m_2}{L_2} - \frac{w_3^{k}m_3 }{L_3}\right). 
\end{align*}
By straightforward computation, we can find the expression for the coordinates of $\left( Y^k \cdot \nabla   \right) Z^m + \left( Z^m \cdot \nabla   \right) Y^k$ as follows  
\begin{subequations}\label{CoorYZ_Cyl}
\begin{gather}
\begin{split} \label{1stCoorMix}
&\left( \left( Y^k \cdot \nabla   \right) Z^m + \left( Z^m \cdot \nabla   \right) Y^k \right)_1\\
&\hspace{4em}= \left(w_1^{m}\beta^{+++}_{w^k,m} + w_1^{k} \beta^{+++}_{w^m,k} \right)
{\rm S}_1 (k_1 + m_1) {\rm C}_2(k_2 + m_2) {\rm S}_3 (k_3 + m_3)  \\
&\hspace{5em}+ \left( -w_1^{m}\beta^{+++}_{w^k,m} + w_1^{k} \beta^{+++}_{w^m,k}\right)
{\rm S}_1 (k_1 - m_1) {\rm C}_2(k_2 - m_2) {\rm S}_3 (k_3 - m_3)  \\
&\hspace{5em}+ \left( - w_1^{m}\beta^{++-}_{w^k,m} - w_1^{k} \beta^{++-}_{w^m,k} \right)
{\rm S}_1 (k_1 + m_1) {\rm C}_2(k_2 + m_2) {\rm S}_3 (k_3 - m_3)  \\
&\hspace{5em}+  \left( w_1^{m}\beta^{++-}_{w^k,m} - w_1^{k} \beta^{++-}_{w^m,k} \right)
{\rm S}_1 (k_1 - m_1) {\rm C}_2(k_2 - m_2) {\rm S}_3 (k_3 + m_3)  \\
&\hspace{5em}+ \left( w_1^{m}\beta^{+-+}_{w^k,m} + w_1^{k} \beta^{+-+}_{w^m,k} \right)
{\rm S}_1 (k_1 + m_1) {\rm C}_2(k_2 - m_2) {\rm S}_3 (k_3 + m_3)  \\
&\hspace{5em}+ \left( -w_1^{m}\beta^{+-+}_{w^k,m} + w_1^{k} \beta^{+-+}_{w^m,k} \right)
{\rm S}_1 (k_1 - m_1) {\rm C}_2(k_2 + m_2) {\rm S}_3 (k_3 - m_3)  \\
&\hspace{5em}+  \left( -w_1^{m}\beta^{+--}_{w^k,m} - w_1^{k} \beta^{+--}_{w^m,k} \right)
{\rm S}_1 (k_1 + m_1) {\rm C}_2(k_2 - m_2) {\rm S}_3 (k_3 - m_3)  \\
&\hspace{5em}+  \left( w_1^{m} \beta^{+--}_{w^k,m} - w_1^{k} \beta^{+--}_{w^m,k}\right)
{\rm S}_1 (k_1 - m_1) {\rm C}_2(k_2 + m_2) {\rm S}_3 (k_3 + m_3), 
\end{split}
\end{gather}
\begin{gather}
\begin{split} \label{2ndCoorMix}
&\left( \left( Y^k \cdot \nabla   \right) Z^m + \left( Z^m \cdot \nabla   \right) Y^k \right)_2\\
&\hspace{4em}= \left(w_2^{m}\beta^{+++}_{w^k,m} + w_2^{k} \beta^{+++}_{w^m,k} \right)
{\rm C}_1 (k_1 + m_1) {\rm S}_2(k_2 + m_2) {\rm S}_3 (k_3 + m_3)  \\
&\hspace{5em}+ \left( -w_2^{m}\beta^{+++}_{w^k,m} + w_2^{k} \beta^{+++}_{w^m,k}\right)
{\rm C}_1 (k_1 - m_1) {\rm S}_2(k_2 - m_2) {\rm S}_3 (k_3 - m_3)  \\
&\hspace{5em}+ \left( - w_2^{m}\beta^{++-}_{w^k,m} - w_2^{k} \beta^{++-}_{w^m,k} \right)
{\rm C}_1 (k_1 + m_1) {\rm S}_2(k_2 + m_2) {\rm S}_3 (k_3 - m_3)  \\
&\hspace{5em}+  \left( w_2^{m}\beta^{++-}_{w^k,m} - w_2^{k} \beta^{++-}_{w^m,k} \right)
{\rm C}_1 (k_1 - m_1) {\rm S}_2(k_2 - m_2) {\rm S}_3 (k_3 + m_3)  \\
&\hspace{5em}+ \left( -w_2^{m}\beta^{+-+}_{w^k,m} + w_2^{k} \beta^{+-+}_{w^m,k} \right)
{\rm C}_1 (k_1 + m_1) {\rm S}_2(k_2 - m_2) {\rm S}_3 (k_3 + m_3)  \\
&\hspace{5em}+ \left( w_2^{m}\beta^{+-+}_{w^k,m} + w_2^{k} \beta^{+-+}_{w^m,k} \right)
{\rm C}_1 (k_1 - m_1) {\rm S}_2(k_2 + m_2) {\rm S}_3 (k_3 - m_3)  \\
&\hspace{5em}+  \left( w_2^{m}\beta^{+--}_{w^k,m} - w_2^{k} \beta^{+--}_{w^m,k} \right)
{\rm C}_1 (k_1 + m_1) {\rm S}_2(k_2 - m_2) {\rm S}_3 (k_3 - m_3)  \\
&\hspace{5em}+  \left( -w_2^{m} \beta^{+--}_{w^k,m} - w_2^{k} \beta^{+--}_{w^m,k}\right)
{\rm C}_1 (k_1 - m_1) {\rm S}_2(k_2 + m_2) {\rm S}_3 (k_3 + m_3), 
\end{split}
\end{gather}
\begin{gather}
\begin{split} \label{3rdCoorMix}
&\left( \left( Y^k \cdot \nabla   \right) Z^m + \left( Z^m \cdot \nabla   \right) Y^k \right)_3\\
&\hspace{4em}= \left(-w_3^{m}\beta^{+++}_{w^k,m} - w_3^{k} \beta^{+++}_{w^m,k} \right)
{\rm C}_1 (k_1 + m_1) {\rm C}_2(k_2 + m_2) {\rm C}_3 (k_3 + m_3)  \\
&\hspace{5em}+ \left( w_3^{m}\beta^{+++}_{w^k,m} - w_3^{k} \beta^{+++}_{w^m,k}\right)
{\rm C}_1 (k_1 - m_1) {\rm C}_2(k_2 - m_2) {\rm C}_3 (k_3 - m_3)  \\
&\hspace{5em}+ \left( - w_3^{m}\beta^{++-}_{w^k,m} + w_3^{k} \beta^{++-}_{w^m,k} \right)
{\rm C}_1 (k_1 + m_1) {\rm C}_2(k_2 + m_2) {\rm C}_3 (k_3 - m_3)  \\
&\hspace{5em}+  \left( w_3^{m}\beta^{++-}_{w^k,m} + w_3^{k} \beta^{++-}_{w^m,k} \right)
{\rm C}_1 (k_1 - m_1) {\rm C}_2(k_2 - m_2) {\rm C}_3 (k_3 + m_3)  \\
&\hspace{5em}+ \left( -w_3^{m}\beta^{+-+}_{w^k,m} - w_3^{k} \beta^{+-+}_{w^m,k} \right)
{\rm C}_1 (k_1 + m_1) {\rm C}_2(k_2 - m_2) {\rm C}_3 (k_3 + m_3)  \\
&\hspace{5em}+ \left( w_3^{m}\beta^{+-+}_{w^k,m} - w_3^{k} \beta^{+-+}_{w^m,k} \right)
{\rm C}_1 (k_1 - m_1) {\rm C}_2(k_2 + m_2) {\rm C}_3 (k_3 - m_3)  \\
&\hspace{5em}+  \left( -w_3^{m}\beta^{+--}_{w^k,m} + w_3^{k} \beta^{+--}_{w^m,k} \right)
{\rm C}_1 (k_1 + m_1) {\rm C}_2(k_2 - m_2) {\rm C}_3 (k_3 - m_3)  \\
&\hspace{5em}+  \left( w_3^{m} \beta^{+--}_{w^k,m} + w_3^{k} \beta^{+--}_{w^m,k}\right)
{\rm C}_1 (k_1 - m_1) {\rm C}_2(k_2 + m_2) {\rm C}_3 (k_3 + m_3).
\end{split}
\end{gather}
\end{subequations}
\begin{remark} 
We do not present the coordinates of the vector \\$\left( Z^k \cdot \nabla   \right) Z^m + \left( Z^m \cdot \nabla   \right) Z^k$
because we will not need them in the construction of $Z^{j(n),n}$. The vectors generate functions of the type $Y^{j(n),n}$.
\end{remark}

\subsection{Two fruitful lemmas}
Next we introduce two fruitful lemmas which play the main role in the proof below. These lemmas help us to avoid complicated computations to obtain an explicit form of the operator $\BB(a,b)$ as some works before in 2D cases (see \cite{Rod-Sev05,PhanRod-ecc15,Rod06,Rod-Thesis08}).

\begin{lemma} \label{lemma-lin-indp_Z}
Let us be given $\alpha, \gamma, \in \R^3$ and $k \in \N_0^3$. Then 
\[\linspan\{ \Pi \ZZ_\alpha^k,   \Pi \ZZ_\gamma^k\} =\linspan Z^{\{1,2\},k}\]
if, and only if, the family~$\{\alpha,\gamma,k\}$ is linearly independent. 
\end{lemma}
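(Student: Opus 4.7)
The statement is a linear algebra assertion about the linear map
\[
\Phi \colon \R^3 \to H, \qquad \Phi(\alpha) \coloneqq \Pi \ZZ_\alpha^k.
\]
The plan is to identify $\Phi$ as a surjection onto the two-dimensional subspace $\linspan Z^{\{1,2\},k}$ with one-dimensional kernel $\R k$; the ``iff'' then drops out by standard rank-nullity reasoning.

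First I would check that $\Phi$ is a well-defined linear map whose image is contained in $\linspan Z^{\{1,2\},k}$. Linearity follows at once from the linear dependence of $\ZZ_\alpha^k$ on $\alpha$ and from the linearity of the Leray projection $\Pi$. The image is contained in $\linspan Z^{\{1,2\},k}$ because $\ZZ_\alpha^k$ involves only trigonometric factors of wave number $k$, and $\Pi$ preserves the $L^2$-orthogonal splitting into such mode subspaces; the divergence-free part of the $k$-mode is, by the construction in Section \ref{subsec:eigfun}, exactly $\linspan\{Z^{1,k}, Z^{2,k}\}$. Surjectivity onto this target is free: evaluating $\Phi$ at $\alpha = w^{j(k),k} \in \{k\}^{\perp_{[L]}}_0$ gives $\ZZ_{w^{j(k),k}}^k = Z^{j(k),k} \in H$, which is fixed by $\Pi$.

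The technical core is identifying $\ker \Phi$. Rank-nullity immediately gives $\dim \ker \Phi = 1$, so it suffices to exhibit a single nonzero element of the kernel and argue that it is proportional to $k$. I would do this by writing down the Helmholtz decomposition of $\ZZ_k^k$ explicitly, using the product trigonometric structure and the definition of $(\cdot,\cdot)_{[L]}$, and checking that $\ZZ_k^k$ lies in $H^\perp$ under the convention in force in the paper (i.e., that it is a gradient of an explicit scalar built from the appropriate cosines and sines of wave number $k$, possibly up to a rescaling convention absorbed in the definition of $\ZZ_\alpha^k$). Once this is verified, $\ker \Phi = \R k$ follows from the dimension count.

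Granted the characterizations $\mathrm{Im}(\Phi) = \linspan Z^{\{1,2\},k}$ and $\ker \Phi = \R k$, the equivalence is mechanical: $\{\Phi(\alpha), \Phi(\gamma)\}$ spans the two-dimensional image if and only if $\{\alpha, \gamma\}$ is linearly independent modulo $\R k$, which is precisely the condition that $\{\alpha, \gamma, k\}$ is linearly independent in $\R^3$. The main obstacle I expect is the explicit kernel computation; both directions of the ``if and only if'' then reduce to this one linear-algebraic fact.
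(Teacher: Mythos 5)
Your overall strategy --- regard $\alpha\mapsto\Pi\ZZ^k_\alpha$ as a linear map $\Phi\colon\R^3\to H$, show its image is $\linspan Z^{\{1,2\},k}$, identify a one-dimensional kernel, and conclude by rank--nullity --- is indeed the route behind the argument the paper invokes (the analogue of Lemma~3.1 in the rectangle case), and the linearity, image and surjectivity parts of your plan are sound: inside the $k$-mode the Helmholtz splitting is explicit, so $\Pi$ maps $\ZZ^k_\alpha$ into $\linspan\{Z^{1,k},Z^{2,k}\}$, and evaluating at $\alpha=w^{j(k),k}$ gives surjectivity.

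The gap is the kernel identification. The gradient direction inside the $k$-mode is
\begin{equation*}
\nabla\bigl({\rm C}_1(k_1)\,{\rm C}_2(k_2)\,{\rm S}_3(k_3)\bigr)
=\pi\,\ZZ^k_{z^*},
\qquad
z^*=\Bigl(-\tfrac{k_1}{L_1},\,-\tfrac{k_2}{L_2},\,\tfrac{k_3}{L_3}\Bigr),
\end{equation*}
so $\Ker\Phi=\R\,\bigl(\tfrac{k_1}{L_1},\tfrac{k_2}{L_2},-\tfrac{k_3}{L_3}\bigr)$ and not $\R k$: since $L_1,L_2,L_3>0$ and $k\in\N_0^3$, these two directions are never parallel (that would force $\tfrac1{L_1}=\tfrac1{L_2}=-\tfrac1{L_3}$). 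Consequently the step you single out as the technical core --- verifying that $\ZZ^k_k\in H^\perp$ --- fails for every admissible $k$, and there is no rescaling convention in the definition of $\ZZ^k_z$ (the coefficients $z_i$ carry no $L$-weights) that could absorb the discrepancy. What your rank--nullity argument actually delivers is the equivalence of $\linspan\{\Pi\ZZ^k_\alpha,\Pi\ZZ^k_\gamma\}=\linspan Z^{\{1,2\},k}$ with the linear independence of $\{\alpha,\gamma,(\tfrac{k_1}{L_1},\tfrac{k_2}{L_2},-\tfrac{k_3}{L_3})\}$, which in general differs from the linear independence of $\{\alpha,\gamma,k\}$ (compare $\alpha=k$ with a generic $\gamma$, or $\alpha$ proportional to the kernel direction). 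So, as written, the proposal does not establish the lemma in its stated form; to complete it you must compute the kernel correctly and formulate (and subsequently apply) the determinant criterion with the $[L]$-weighted, sign-adjusted vector $\bigl(\tfrac{k_1}{L_1},\tfrac{k_2}{L_2},-\tfrac{k_3}{L_3}\bigr)$ in place of $k$, rather than hope the two conditions coincide.
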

In Lemma~\ref{lemma-lin-indp_Z}, we denote
\begin{align*}
\ZZ^n\coloneqq\begin{pmatrix}
z_1 \sin \left( \frac{k_1 \pi x_1}{L_1} \right) \cos \left( \frac{k_2 \pi x_2}{L_2} \right) \sin \left( \frac{k_3 \pi x_3}{L_3} \right)\\
z_2 \cos \left( \frac{k_1 \pi x_1}{L_1} \right) \sin \left( \frac{k_2 \pi x_2}{L_2} \right) \sin \left( \frac{k_3 \pi x_3}{L_3} \right)\\ 
z_3 \cos \left( \frac{k_1 \pi x_1}{L_1} \right) \cos \left( \frac{k_2 \pi x_2}{L_2} \right) \cos \left( \frac{k_3 \pi x_3}{L_3} \right),
 \end{pmatrix}
\end{align*}
for given~$z\in\R^3$ and $n\in\N^3$. The proof is analogous to the Lemma 3.1 in \cite{PhanRod18JDCS}. We will also need the following
relaxed version.
 \begin{lemma} \label{lemma-lin-indp2}
Let us be given $\alpha, \gamma, \delta \in \R^3$ and $k \in \N_0^3$. Then 
\[\linspan\{ \Pi \ZZ_\alpha^k,   \Pi \ZZ_\gamma^k, \Pi \ZZ_\delta^k\} =\linspan Z^{\{1,2\},k}\]
if, and only if, at least one of the families~$\{\alpha,\gamma,k\}$, $\{\alpha,\delta,k\}$, and~$\{\gamma,\delta,k\}$ is linearly independent. 
\end{lemma}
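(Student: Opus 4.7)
The plan is to deduce Lemma \ref{lemma-lin-indp2} directly from Lemma \ref{lemma-lin-indp_Z}, treating the ``if'' and ``only if'' directions separately and doing the work on the level of the two-dimensional target space $\linspan Z^{\{1,2\},k}$.

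For the ``if'' direction, suppose without loss of generality that $\{\alpha,\gamma,k\}$ is linearly independent. Then Lemma \ref{lemma-lin-indp_Z} immediately gives $\linspan\{\Pi\ZZ^k_\alpha,\Pi\ZZ^k_\gamma\}=\linspan Z^{\{1,2\},k}$. Since each $\Pi\ZZ^k_\beta$ (including $\beta=\delta$) lies by construction in $\linspan Z^{\{1,2\},k}$, enlarging the spanning set to include $\Pi\ZZ^k_\delta$ cannot change the span, so the equality is preserved. The cases where $\{\alpha,\delta,k\}$ or $\{\gamma,\delta,k\}$ are independent are handled identically.

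For the ``only if'' direction I would argue by contrapositive. Assume that all three triples $\{\alpha,\gamma,k\}$, $\{\alpha,\delta,k\}$, $\{\gamma,\delta,k\}$ are linearly dependent. Applying Lemma \ref{lemma-lin-indp_Z} to each pair, one obtains that each of the spans $\linspan\{\Pi\ZZ^k_\alpha,\Pi\ZZ^k_\gamma\}$, $\linspan\{\Pi\ZZ^k_\alpha,\Pi\ZZ^k_\delta\}$ and $\linspan\{\Pi\ZZ^k_\gamma,\Pi\ZZ^k_\delta\}$ is a \emph{proper} subspace of the two-dimensional $\linspan Z^{\{1,2\},k}$, hence has dimension at most one. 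A short linear-algebra observation then shows that the three vectors $\Pi\ZZ^k_\alpha$, $\Pi\ZZ^k_\gamma$, $\Pi\ZZ^k_\delta$ must all lie in a single one-dimensional subspace: if $\Pi\ZZ^k_\alpha\neq 0$ then the pairwise one-dimensionality forces $\Pi\ZZ^k_\gamma$ and $\Pi\ZZ^k_\delta$ to be scalar multiples of $\Pi\ZZ^k_\alpha$, and if $\Pi\ZZ^k_\alpha=0$ one repeats the argument starting from whichever of $\Pi\ZZ^k_\gamma$, $\Pi\ZZ^k_\delta$ is nonzero (or all three are zero). Therefore $\linspan\{\Pi\ZZ^k_\alpha,\Pi\ZZ^k_\gamma,\Pi\ZZ^k_\delta\}$ has dimension at most one and cannot coincide with the two-dimensional $\linspan Z^{\{1,2\},k}$, which is the desired contrapositive.

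No step here looks genuinely hard: the only thing that needs a moment of care is the simple linear-algebra lemma that three pairwise-collinear vectors are collinear, and the implicit use of the fact that $\linspan Z^{\{1,2\},k}$ has dimension exactly two in the relevant range of $k$ (the same hypothesis under which Lemma \ref{lemma-lin-indp_Z} is stated). Thus Lemma \ref{lemma-lin-indp2} is essentially a packaging of Lemma \ref{lemma-lin-indp_Z} designed to handle the situations in the main proof where no single pair of ``test vectors'' is guaranteed to be independent of $k$, but at least one pair among three is.
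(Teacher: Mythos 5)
Your proof is correct and follows essentially the same route as the paper: both reduce the statement to Lemma~\ref{lemma-lin-indp_Z} via the observation that the inclusion $\linspan\{\Pi\ZZ^k_\alpha,\Pi\ZZ^k_\gamma,\Pi\ZZ^k_\delta\}\subseteq\linspan Z^{\{1,2\},k}$ always holds and that equality with the two-dimensional target is equivalent to some pair of the projected vectors being linearly independent. You merely spell out (via the contrapositive and the fact that pairwise-collinear vectors are collinear) a step the paper leaves as ``straightforward.''
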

\begin{proof}
Notice that the inclusion~$\linspan\{ \Pi \ZZ_\alpha^k,   \Pi \ZZ_\gamma^k, \Pi \ZZ_\delta^k\} \subseteq\linspan Z^{\{1,2\},k}$ holds true
for all~$\alpha, \gamma, \delta \in \R^3$.
Then, the reverse inclusion holds if, and only if, we can set two vectors in~$\{ \Pi \ZZ_\alpha^k,   \Pi \ZZ_\gamma^k, \Pi \ZZ_\delta^k\}$ which are linearly independent.
The Lemma follows straightforwardly from Lemma~\ref{lemma-lin-indp_Z}. 
\end{proof}
\begin{remark}
Two lemmas above will help us to prove the linear independent property. However mostly in the proof (for example in Step \ref{Step352}, equation \eqref{eq-lem-fail}), we can not prove the linear independence by using only Lemma \ref{lemma-lin-indp_Z} (as the procedure that we used before in the case of 3D Rectangle in \cite{PhanRod18JDCS}). Therefore we must introduce the Lemma \ref{lemma-lin-indp2} and apply it for these situations. The main difference between two Lemmas is that we need three choices to generate the vectors $Z^k$ in Lemma \ref{lemma-lin-indp2} comparing with two choices in Lemma \ref{lemma-lin-indp_Z}. 
\end{remark}

\subsection{Proof of all Lemmas \ref{lem-genR3}--\ref{lem-genq+1}}
\label{sec: ProofSat3DCyl}
We introduce notations used frequently in proofs below. For $z_1, z_2, z_3 \in \R^3$, the matrix $\left( z_1 \mid z_2 \mid z_3 \right)$ is a square matrix whose each column is the vector $z_i$ for $i \in \{1,2,3\}$; and $\det\left( z_1 \mid z_2 \mid z_3 \right)$ denotes its determinant.
 
 \bigskip
\subsubsection{Proof of Lemma \ref{lem-genR3}}
\label{subsec-proof.genR3}
In this proof, we will construct all eigenfunctions $Z^{j(n),n}$ with $n \in \RR^{q+1}_3$. It is equivalent that the third coordinates of indices $n$ are always $p+1$. 
We divide this proof into three steps
\begin{enumerate}
\renewcommand{\theenumi}{{\sf3.3.\arabic{enumi}}} 
 \renewcommand{\labelenumi}{} 
\item $\bullet$~Step~\theenumi:\label{Step351}{ Generating $Z^{1,n}$ with $n \in \{ (0,l, q+1),~(l,0, q+1) \mid 1 \le l \leq q \}$.}
\item $\bullet$~Step~\theenumi:\label{Step352}{ Generating $Z^{\{1,2\},n}$  with  $n \in \{ (1,l, q+1),~(l,1, q+1) \mid 1 \le l \le q \}$.}
\item $\bullet$~Step~\theenumi:\label{Step353}{ Generating all $Z^{\{1,2\},n}$ with $n \in \{ (n_1,n_2, q+1) \mid  2 \le n_1 \le q, 2 \le n_2 \le q \}$ by induction.}
\end{enumerate}

\bigskip
\textbf{Step \ref{Step351}:} Generating $Z^{1,n}$ with $n \in \{ (0,l, q+1),~(l,0, q+1) \mid 1 \le l \leq q \}$.

In this step, we will generate eigenfunctions with indices $n$ whose first and second coordinates belongs two orthogonal sides of the square $[0,q] \times [0,q]$ in Figure \ref{fig_step1}. Notice that the index $(0,0,q+1)$ does not correspond with any eigenfunction. 
  
\begin{figure}[ht] \centering 
\includegraphics[width=0.5\textwidth]{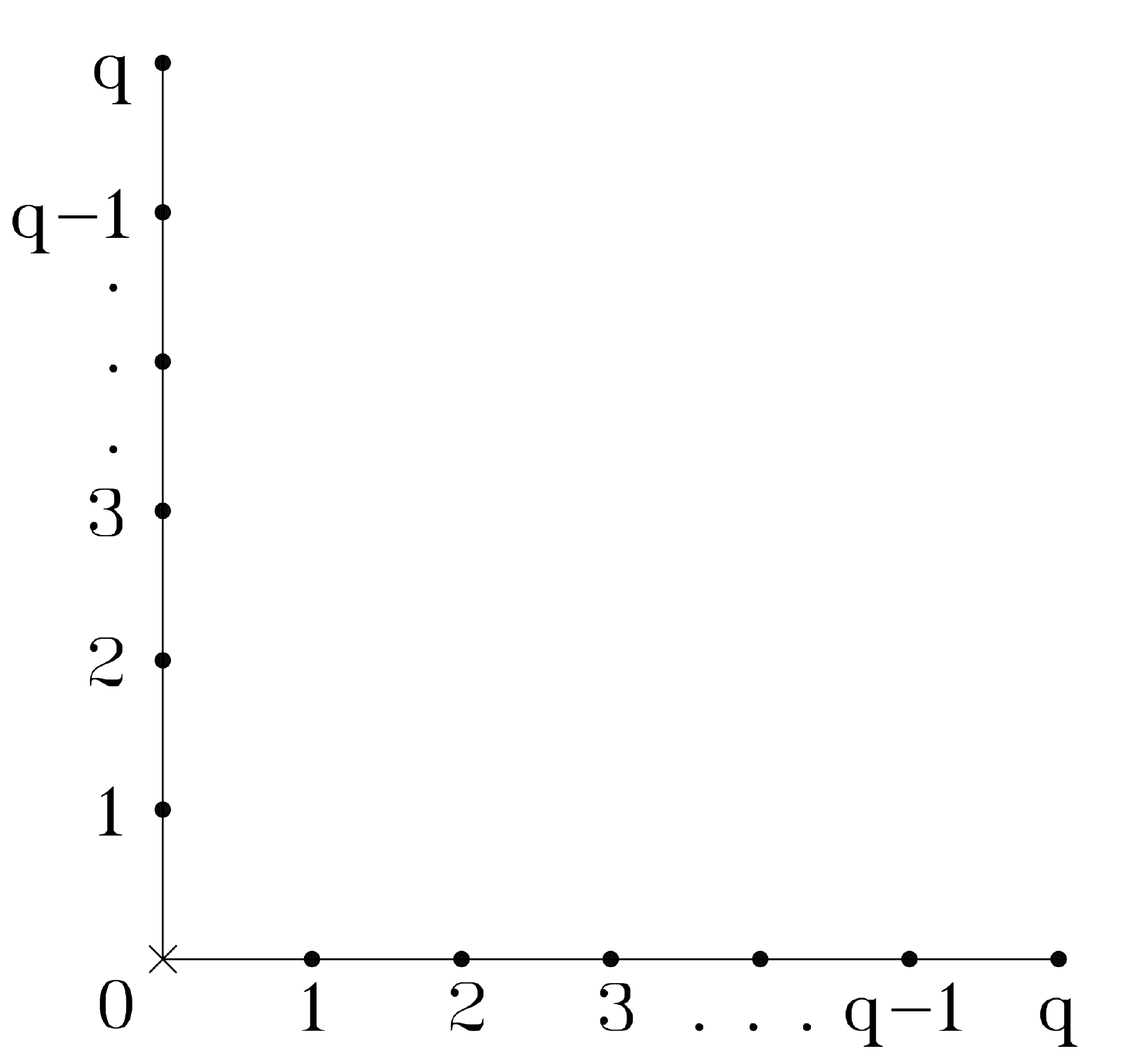}
\caption{Step \ref{Step351}.}
\label{fig_step1}
\end{figure}

{\em The case $n=(0,l,q+1)$.} We may follow the result in 2D Cylinder addressed in~\cite{PhanRod-ecc15}.
Indeed from~\eqref{CoorYZ_Cyl} we find that for~$k$ and~$m$ such that~$k_1=m_1=0$, 
\[
Y^k=\begin{pmatrix}
     0\\\widehat Y^k
    \end{pmatrix}\qquad\mbox{and}\qquad Z^k=\begin{pmatrix}
     0\\\widehat Z^k
    \end{pmatrix}
\]
where for suitable constants~$C_1$ and~$C_2$
\[
 \widehat Y^k=C_1\begin{pmatrix}
     \frac{-k_3\pi}{L_3}\sin(\frac{k_2\pi x_2}{L_2})\cos(\frac{k_3\pi x_3}{L_3})\\ \frac{k_2\pi}{L_2}\cos(\frac{k_2\pi x_2}{L_2})\sin(\frac{k_3\pi x_3}{L_3})
    \end{pmatrix}~~\mbox{and}~~\widehat Z^k=C_1\begin{pmatrix}
     \frac{k_3\pi}{L_3}\sin(\frac{k_2\pi x_2}{L_2})\sin(\frac{k_3\pi x_3}{L_3})\\ \frac{k_2\pi}{L_2}\cos(\frac{k_2\pi x_2}{L_2})\cos(\frac{k_3\pi x_3}{L_3})
    \end{pmatrix},
\]
where the functions~$\widehat Y^k$ and~$\widehat Z^k$ are eigenfuntions of the Stokes operator in~${\rm C}_2=(0,L_2)\times \frac{L_3}{\pi}\mathbb{S}^1$ under Lions boundary conditions.
Using an argument that is similar to the one used to derive~$\linspan\{Y^n\mid n\in\sS^{q+1}_{\rm R}, \#_0(n)=1\}\subset \GG^{q}$ as in \cite[Section 3.4]{PhanRod18JDCS}, we can derive that
$$
 \BB(Y^k)Z^k=\begin{pmatrix}
              0\\\Pi_2\left( \left( \widehat Y^k \cdot \nabla_2   \right) \widehat Z^m + \left( \widehat Z^m \cdot \nabla_2   \right) \widehat Y^k \right)
             \end{pmatrix}
$$
with~$\Pi_2$ being the orthogonal projection onto $H_2=\{u\in L^2({\rm C}_2,T{\rm C}_2)\mid u\cdot\nnn=0, \diver_2 u=0\}$
and~$\nabla_2$ and~$\diver_2$ being the gradient and divergence operators in~${\rm C}_2\sim(0,L_2)\times(0,2L_3)$.

Therefore from~\cite[proof of Theorem~4.1]{PhanRod-ecc15} we know that
\begin{subequations}\label{step351.res}
 \begin{align}
 &\{Z^{1,n}\mid n=(0,l,q+1), 0<l\le q\}\subset\GG^{q}.
 \intertext{ and a similar argument gives us}
 &\{Z^{1,n}\mid n=(l,0,q+1), 0<l\le q\}\subset\GG^{q}.
\end{align}
\end{subequations}

\bigskip
\textbf{Step \ref{Step352}:} Generating $Z^{\{1,2\},n}$ with $n \in \{ (1,l, q+1),~(l,1, q+1) \mid 1 \le l \le q \}$.

After Step \ref{Step351}, we obtain eigenfunctions with indices $n$ whose first and second coordinates belongs two orthogonal sides of the square. In this step, we will continue with two new lines plotted in Figure \ref{fig_step2}.

\begin{figure}[ht] \centering 
\includegraphics[width=0.5\textwidth]{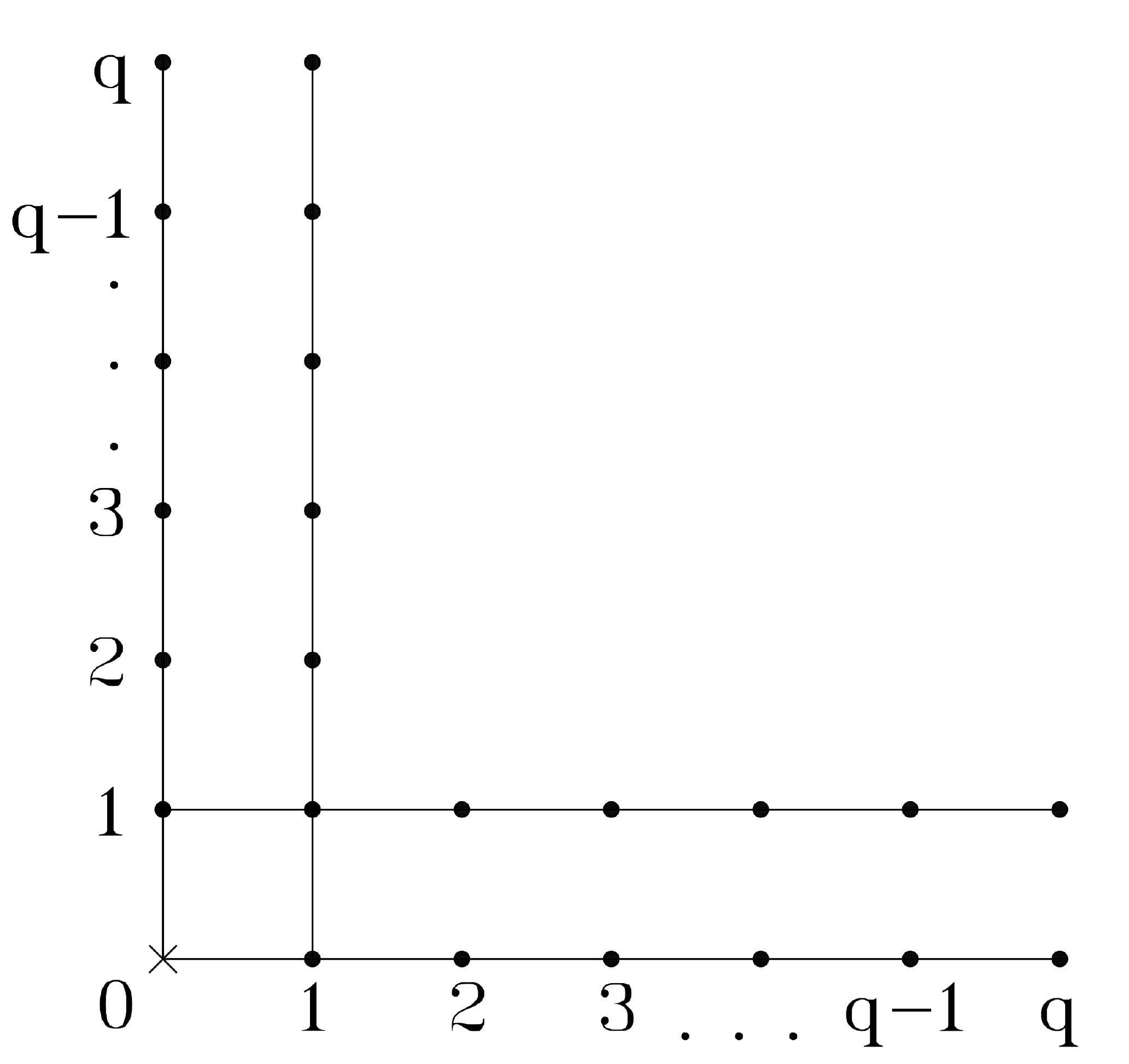}
\caption{Step \ref{Step352}.}
\label{fig_step2}
\end{figure}

{\em The case $n = (1,1,q+1)$}. Firstly, we choose
\begin{align*}
k &=(1,0,q),& m &=(0,1,1),\\ w^k &= (L_1q,0,-L_3),& w^m &= (0,L_2,-L_3).
\end{align*}
From \eqref{CoorYZ_Cyl}, this choice gives us
\begin{align*}
\left( Y^k \cdot \nabla   \right) Z^m + \left( Z^m \cdot \nabla   \right) Y^k = \ZZ^{(1,1,q+1)}_{z_{\alpha^1}} + \ZZ^{(1,1,q-1)}_{z_{\alpha^2}}, 
\end{align*}
for suitable $z_{\alpha^1}, z_{\alpha^2} \in \R^3$.
By the induction hypothesis~\eqref{Sat_Inc_CZ}, we have $Z^{\{1,2 \},(1,1,q-1)} \in \GG^{q-1}\subset \GG^{q} $. It is equivalent that $\ZZ^{(1,1,q-1)}_{z_{\alpha^2}} \in \GG^{q}$. Next, from
\begin{align*}
\beta_{w^k,m}^{\star_1\star_2+} = -\frac{\pi}{8},\quad\beta_{w^k,m}^{\star_1\star_2-} = \frac{\pi}{8},\quad\beta_{w^m,k}^{\star_1\star_2+} = -\frac{\pi}{8}q,
\quad\mbox{and}\quad\beta_{w^m,k}^{\star_1\star_2-} = \frac{\pi}{8}q,
\end{align*}
with $\{\star_1,\star_2\}\subseteq \{+,-\}$, following the expressions in~\eqref{CoorYZ_Cyl}, we find
\begin{align*}
 z_{\alpha^1}
&= \begin{pmatrix}
0 + {L_1}q\left( \beta_{w^m,k}^{+++} - \beta_{w^m,k}^{++-} + \beta_{w^m,k}^{+-+} - \beta_{w^m,k}^{+--} \right) \\
{L_2} \left( \beta_{w^k,m}^{+++} + \beta_{w^k,m}^{++-} \sign(0-1) - \beta_{w^k,m}^{+-+} \sign(0-1) -\beta_{w^k,m}^{+--} \right) + 0 \\
-{L_3} \left(- \beta_{w^k,m}^{+++}  - \beta_{w^k,m}^{+-+} + \beta_{w^k,m}^{+--} 
+ \beta_{w^k,m}^{++-} \right) - {L_3} \left( -\beta_{w^m,k}^{+++} - \beta_{w^m,k}^{+-+} + \beta_{w^m,k}^{+--} + \beta_{w^m,k}^{++-} \right) 
\end{pmatrix} \\&= -\frac{\pi}{2}
\begin{pmatrix}
L_1q^2 \\
{ L_2} \\
L_3(q+1)
\end{pmatrix},
\end{align*}
and we can conclude that $\Pi \ZZ^{(1,1,q+1)}_{z_{\alpha^1}} \in \GG^{q}$.

Secondly, we choose
\begin{align*}
k &=(0,1,q),& m &=(1,0,1),\\ w^k &= (0,L_2q, -L_3),& w^m &= (L_1,0,-L_3)
\end{align*}
which again gives us~$\left( Y^k \cdot \nabla   \right) Z^m + \left( Z^m \cdot \nabla   \right) Y^k = \ZZ^{(1,1,q+1)}_{z_{\gamma^1}} + \ZZ^{(1,1,q-1)}_{z_{\gamma^2}}$ with
\begin{align*}
z_{\gamma^1} = -\frac{\pi}{2}\begin{pmatrix}
 L_1 \\
 L_2q^2 \\
 L_3 (q+1) \\
\end{pmatrix}.
\end{align*}
Again, we get that $\Pi\ZZ^{(1,1,q+1)}_{z_{\gamma^1}} \in \GG^q$. 

We observe that
\begin{align*}\det\left(n\mid z_{\alpha^1}\mid z_{\gamma^1}\right)
&= \frac{\pi^2}{4} (q+1)
\det \begin{pmatrix}
1 & L_1 q^2 & L_1\\
1 & L_2 & L_2q^2\\
1 & L_3 & L_3
\end{pmatrix} \\
&= \frac{\pi^2}{4} (q+1)^2 (q-1) \left( L_1L_2 q^2 + L_1L_2 - L_2L_3 - L_1L_3 \right),
\end{align*}
and that, since $q\ge4$,
\begin{equation} \label{eq-lem-fail}
\det\left(n\mid z_{\alpha^1}\mid z_{\gamma^1}\right)  = 0 \quad \Longleftrightarrow\quad q^2 = \frac{L_2L_3+L_1L_3-L_1L_2}{L_1L_2}. 
\end{equation}
Thus from Lemma~\ref{lemma-lin-indp_Z} we conclude that~$ \Pi \ZZ^{(1,1,q+1)}_{z_{\alpha^1}}$ and $\Pi \ZZ^{(1,1,q+1)}_{z_{\gamma^1}}$
are not necessarily linearly independent. So, next
we want to use Lemma~\ref{lemma-lin-indp2}. For that, we choose the third quadruple
\begin{align*}
k &=(1,0,q-1),& m &=(0,1,2),\\ w^k &= (L_1(q-1),0,-L_3),& w^m &= (0,2L_2,-L_3),
\end{align*}
which gives us 
\begin{align*}
\left( Y^k \cdot \nabla   \right) Z^m + \left( Y^m \cdot \nabla   \right) Z^k = \ZZ^{(1,1,q+1)}_{z_{\delta^1}} + \ZZ^{(1,1,q-3)}_{z_{\delta^2}}, 
\end{align*}
for suitable $z_{\delta^1}, z_{\delta^2} \in \R^3$.
Since by~\eqref{Sat_Inc_CZ} $Z^{\{1,2\},(1,1,q-3)}  \in \GG^{q-1} \subset \GG^{q}$, we can conclude that 
 $\Pi \ZZ^{(1,1,q+1)}_{z_{\delta^1}} \in \GG^{q}$. We can also find
\begin{align*}
&~z_{\delta^1}=  -\frac{\pi}{2}
\begin{pmatrix}
L_1(q-1)^2 \\
4 L_2  \\
 L_3(q+1)
\end{pmatrix}.
\end{align*}
Now, we compute
\begin{align*}
&\frac{4}{\pi^2(q+1)}\det\left(n \mid z_{\alpha^1}\mid z_{\delta^1} \right)  =   \det \begin{pmatrix}
1 & L_1 q^2 & L_1 (q-1)^2\\
1 & L_2 & 4L_2\\
1 & L_3 & L_3 
\end{pmatrix} \\ 
&\qquad=    3L_1L_2 q^2 + 2 (L_1L_2 - L_1L_3) q + L_1L_3 - L_1L_2 - 3L_2L_3  \\ 
&\frac{4}{\pi^2(q+1)}\det \left(n \mid z_{\gamma^1} \mid z_{\delta^1} \right)  =    \det \begin{pmatrix}
1 & L_1  & L_1 (q-1)^2\\
1 & L_2 q^2 & 4L_2\\
1 & L_3 & L_3 
\end{pmatrix} \\ 
&\qquad=   -L_1L_2 q^4 + 2L_1L_2 q^3 + (L_2L_3+ L_1L_3 - L_1L_2) q^2 -2L_1L_3q + 4L_1L_2 - 4L_2L_3.  
\end{align*}
From which we conclude that 
$$\det\left(n\mid z_{\alpha^1}\mid z_{\gamma^1}\right) = \det\left(n \mid z_{\alpha^1}\mid z_{\delta^1} \right)= \det \left(n \mid z_{\gamma^1} \mid z_{\delta^1} \right) = 0$$ if, and only if,  
\begin{subequations}
\begin{align}
L_1L_2 q^2 + L_1L_2 - L_2L_3 - L_1L_3 &= 0 \label{step12-eq1} \\
 L_2 ( L_3- L_1) (q-2) &= 0 \label{step12-eq2}\\
 L_1 (L_2-L_3) (q-2) &= 0. \label{step12-eq3} 
\end{align}
\end{subequations}
Since $q \ge 4$ and $L_1,\,L_2,\,L_3 >0$, from \ref{step12-eq2} and \ref{step12-eq3}, it arrives that $L_1 = L_2 = L_3$. In this case, from \ref{step12-eq1} we arrive to
the contradiction $q = 1$. Hence, at least one of the families~$\{n,z_{\alpha^1},z_{\gamma^1}\}$, $\{n,z_{\alpha^1},z_{\delta^1}\}$, or~$\{n,z_{\gamma^1},z_{\delta^1}\}$ is
linearly independent. From Lemma~\ref{lemma-lin-indp2}
we can conclude that  
\begin{align} \label{res-step21-3Dcyl}
Z^{\{1,2\},(1,1,q+1)} \in \GG^{q}.
\end{align}

\medskip
{\em The case $n = (1, l , q +1 ) $ with $2 \le l \le q$.}\\
We will prove by induction. Assume that
\begin{equation}\label{IH-C1l}
 Z^{j(k),k} \subset  \GG^{q} \quad \text{with} \quad k = (1,l-2,q+1).
\end{equation}
We will prove that $Z^{\{1,2 \},(1,l, q+1)}$. 

Firstly, we choose
\begin{align*}
k &=(1,l-1,q),& m &=(0,1,1),\\ w^k &= (0,L_2q,L_3(1-l)),& w^m &= (0,L_2,-L_3),
\end{align*} 
This choice gives us 
\begin{align*}
\left( Y^k \cdot \nabla   \right) Z^m + \left( Z^m \cdot \nabla   \right) Y^k = \ZZ^{(1,l,q+1)}_{z_{\alpha^1}} + \ZZ^{(1,l-2,q+1)}_{z_{\alpha^2}} + \ZZ^{(1,l,q-1)}_{z_{\alpha^3}} +  \ZZ^{(1,l-2,q-1)}_{z_{\alpha^4}}.
\end{align*}
From~\eqref{IH.C} and~\eqref{IH-C1l}, we have that $\ZZ^{(1,l-2,q+1)}_{z_{\alpha^2}}$, $\ZZ^{(1,l,q-1)}_{z_{\alpha^3}}$, and  $\ZZ^{(1,l-2,q-1)}_{z_{\alpha^4}}$ belong in $\GG^{q}$. Therefore, we can conclude that $\Pi\ZZ^{(1,l,q+1)}_{z_{\alpha^1}} \in \GG^{q}$.\\
Next, from
\begin{align*}
\beta_{w^k,m}^{+++} =\beta_{w^k,m}^{-++} =  \frac{\pi (q-l +1)}{8}\quad\mbox{and}\quad\beta_{w^m,k}^{+++} =\beta_{w^m,k}^{-++} =  \frac{\pi(l-q-1)}{8},
\end{align*}
we obtain
\begin{align*}
z_{\alpha^1}
&= \begin{pmatrix}
0  \\
L_2 \left( \beta_{w^k,m}^{+++} + \beta_{w^k,m}^{-++} \right) + L_2q \left( \beta_{w^m,k}^{+++} + \beta_{w^m,k}^{-++} \right) \\
-L_3\left( -\beta_{w^k,m}^{+++} - \beta_{w^k,m}^{-++} \right) + L_3(1-l) \left( -\beta_{w^m,k}^{+++} - \beta_{w^m,k}^{-++} \right)
\end{pmatrix}\\
&=  -\frac{\pi}{4}
\begin{pmatrix}
0 \\
 L_2(q-l+1)(q-1)\\
 L_3 (q-l+1)(l-2)
\end{pmatrix}.
\end{align*} 

Secondly we choose
\begin{align*}
k &=(1,l-1,q),& m &=(1,1,1),\\ w^k &= (L_1q,0,-L_3),& w^m &= (0,L_2,-L_3),
\end{align*}
which allow us to obtain $\Pi\ZZ^{(1,l,q+1)}_{z_{\gamma^1}} \in \GG^{q}$, and from
\[
\beta_{w^k,m}^{+++} = \beta_{w^k,m}^{-++} = -\frac{\pi}{8}\quad\mbox{and}\quad\beta_{w^m,k}^{+++} = \beta_{w^m,k}^{-++} =  \frac{\pi}{8} (l-q-1), 
\]
we can find  
\begin{align*}
z_{\gamma^1}=  -\frac{\pi}{4}
\begin{pmatrix}
 L_1 q(q-l+1) \\
 L_2 \\
L_3(q-l+2)
\end{pmatrix}.
\end{align*}

Thirdly, we choose
\begin{align*}
k &=(1,l-1,q),& m &=(1,1,1),\\ w^k &= (0,L_2q,L_3(1-l)),& w^m &= (L_1,-L_2,0),
\end{align*}
which gives us that $\Pi\ZZ^{(1,l,q+1)}_{z_{\delta^1}} \in \GG^{q}$, with 
\begin{align*}
z_{\delta^1}= -\frac{\pi}{4}
\begin{pmatrix}
- L_1 (q-l+1) \\
 L_2 (ql-l+1) \\
L_3(l-1)^2
\end{pmatrix}.
\end{align*}

Next we compute 
\begin{align*}
\det \left(n\mid z_{\alpha^1}\mid z_{\gamma^1}\right)  &= -\frac{\pi^2}{16} q (q-l+1)^2 \left( L_1L_2 q^2 - L_1L_2 - L_2L_3 - L_1L_3 l(l-2)\right), \\
\det \left(n \mid z_{\alpha^1}\mid z_{\delta^1}\right)  &=\frac{\pi^2}{16} (q-l+1)^2 \left( L_1L_2 q^2 + L_2L_3 - L_1L_2-L_1L_3 l(l-2)\right) , 
\end{align*}
and observe that $\det \left(n\mid z_{\alpha^1}\mid z_{\gamma^1}\right)=\det \left(n \mid z_{\alpha^1}\mid z_{\delta^1}\right)=0$ if, and only if,
\begin{align*}
 L_1L_2 q^2 - L_1L_2 - L_2L_3 - L_1L_3 l(l-2) = L_1L_2 q^2 + L_2L_3 - L_1L_2-L_1L_3 l(l-2) = 0,
\end{align*}
because $2 \le l \le q$, which implies 
$2 L_2L_3 = 0$. This contradicts the fact that $L_1,\,L_2,\,L_3 >0$. Therefore
one of the families~$\{n,z_{\alpha^1},z_{\gamma^1}\}$ or~$\{n,z_{\alpha^1},z_{\delta^1}\}$ is linearly independent and, by Lemma~\ref{lemma-lin-indp2}, it follows
that $Z^{\{1,2\},(1,l,q+1)} \in \GG^{q}$. 

By induction, using~\eqref{step351.res},~\eqref{res-step21-3Dcyl}, and the induction hypothesis~\eqref{IH-C1l} it follows that
\begin{subequations}\label{step352.res}
\begin{align}
Z^{(1,l,q+1)} \in \GG^{q}, \qquad\mbox{for all}\quad 0\le l\le q\\
\intertext{and proceeding similarly we can also derive that}
Z^{(l,1,q+1)} \in \GG^{q},\qquad\mbox{for all}\quad 0\le l\le q.
\end{align}
\end{subequations}

\bigskip
\textbf{Step \ref{Step353}:} Generating the family $Z^{\{1,2\},n}$ with $n = (n_1,n_2, q+1)$ where $2 \le n_1 \le q $  and $2 \le n_2 \le q$.

In the final step, we will generate all remaining eigenfunctions with indices $n$ whose the first and second coordinates belongs to the square. The process is based on the induction \eqref{IH-Cn1n2}. 
\begin{figure}[ht] \centering 
\includegraphics[width=.5\textwidth]{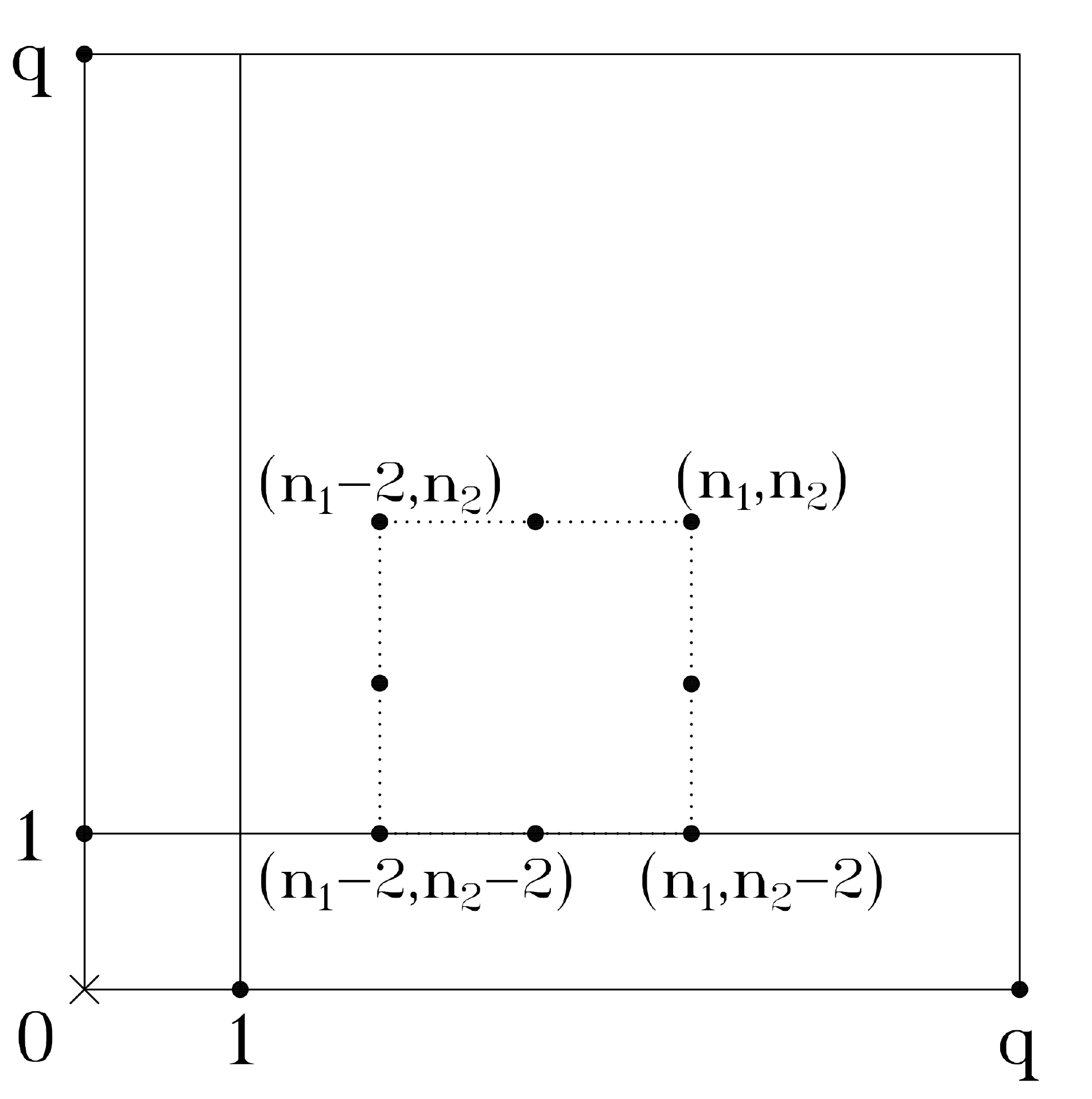}
\caption{Step \ref{Step353}.}
\label{fig_step3}
\end{figure}

Firstly, we introduce an induction hypothesis. Assume that
\addtocounter{equation}{1}
\begin{align}\label{IH-Cn1n2}
 &Z^{j(\kappa),\kappa} \in \GG^{q},\\
 &\mbox{for}\quad \kappa\in\{(n_1-2,n_2-2,q+1),(n_1-2,n_2,q+1),(n_1,n_2-2,q+1)\}.\notag
\end{align}
We will prove that $Z^{\{1,2\},(n_1,n_2,q+1)} \in \GG^q$.

By choosing
\begin{align*}
k &=(n_1-1,n_2-1,q),& m &=(1,1,1),\\ w^k &= (0,L_2q,L_3(1-n_2)),& w^m &= (0,L_2,-L_3),
\end{align*}
we obtain
 \begin{align*}
\left( Y^k \cdot \nabla   \right) Z^m + \left( Z^m \cdot \nabla   \right) Y^k = \ZZ^{(n_1,n_2,q+1)}_{z_{\alpha^1}} + {\sum_{i=2}^8{\ZZ^{\kappa^i}_{z_{\alpha^i}}},}
\end{align*}
with $\kappa^i \in \{(n_1-2,n_2-2,q-1), (n_1,n_2-2,q-1), (n_1-2,n_2,q-1),(n_1,n_2,q-1) (n_1-2,n_2-2,q+1), (n_1,n_2-2,q+1), (n_1-2,n_2,q+1) \}$.
Using the inductive hypothesis~\eqref{IH.C}, we find that 
$
Z^{j(\kappa), \kappa} \in \GG^{q},
$
for $\kappa \in \{ (n_1-2,n_2-2,q-1), (n_1,n_2-2,q-1), (n_1-2,n_2,q-1),(n_1,n_2,q-1) \}$. From the inductive hypothesis~\eqref{IH-Cn1n2} we also have
$
Z^{j(\kappa), \kappa} \in \GG^{q},
$ for $\kappa \in \{ (n_1-2,n_2-2,q+1), (n_1,n_2-2,q+1), (n_1-2,n_2,q+1) \}$.
Thus, we can conclude that $\Pi \ZZ^{(n_1,n_2,q+1)}_{z_{\alpha^1}} \in \GG^{q}$. 

Next, from
\[
 \beta_{w^k,m}^{+++} = \frac{\pi}{8} (q-n_2+1)\quad\mbox{and}\quad\beta_{w^m,k}^{+++} = \frac{\pi}{8} (n_2-q-1),
\]
we obtain
\begin{align*}
z_{\alpha^1} = -\frac{\pi}{8}
\begin{pmatrix}
0 \\
L_2(q-n_2+1)(q-1) \\
L_3(q-n_2+1)(n_2-2)
\end{pmatrix}.
\end{align*}
A second choice is
\begin{align*}
k &=(n_1-1,n_2-1,q),& m &=(1,1,1),\\ w^k &= (L_1q,0,L_3(1-n_1)),& w^m &= (0,L_2,-L_3),
\end{align*}
which gives us $\Pi \ZZ^{(n_1,n_2,q+1)}_{z_{\gamma^1}} \in \GG^{q}$. From
\[
\beta_{w^k,m}^{+++}  = \frac{1}{8} \left( q-n_1 +1 \right),\quad\mbox{and}\quad\beta_{w^m,k}^{+++} =  \frac{1}{8} (n_2 - q-1),
\] we obtain 
\begin{align*}
z_{\gamma^1}= \frac{\pi}{8}
\begin{pmatrix}
-L_1q(q-n_2+1) \\
 L_2(q-n_1+1) \\
L_3(1-n_1)(q-n_2+1) + L_3(q-n_1+1)
\end{pmatrix}.
\end{align*}
Another choice is
\begin{align*}
k &=(n_1,n_2-1,q),& m &=(0,1,1),\\ w^k &= (L_1(n_2-1),-L_2n_1,0),& w^m &= (0,L_2,-L_3),
\end{align*}
which gives us $\Pi \ZZ^{(n_1,n_2,q+1)}_{z_{\delta^1}} \in \GG^{q}$, with 
\begin{align*}
z_{\delta^1}= \frac{\pi}{8}
\begin{pmatrix}
 -L_1(q-n_2+1)(n_2-1) \\
 L_2n_1 (q-n_2) \\
 -L_3n_1
\end{pmatrix}.
\end{align*}

Next, from $2 \le n_1,\,n_2 \le q$ and
\begin{align*}
&-\frac{64}{\pi^2(q-n_2+1)}\det \left(n\mid z_{\alpha^1}\mid z_{\gamma^1}\right)\\
&\qquad=  \det \begin{pmatrix}
n_1  & 0 & -L_1(q-n_2+1)(n_2-1)\\
n_2 & L_2(q-1)&L_2(q-n_1+1) \\
q+1 & L_3(n_2-2) &L_3(1-n_1)(q-n_2+1) + L_3(q-n_1+1)
\end{pmatrix} \\
&\qquad= q(q-n_2+1) \left( L_1L_2 q^2 - L_1L_2 - L_2L_3n_1 (n_1 - 2) -  L_1L_3n_2 (n_2 - 2)  \right), \\
&-\frac{64}{\pi^2(q-n_2+1)}\det \left(n\mid z_{\alpha^1}\mid z_{\delta^1}\right)=  \det \begin{pmatrix}
n_1  & 0 & -L_1q(q-n_2+1)\\
n_2 & L_2(q-1)&L_2n_1 (q-n_2) \\
q+1 & L_3(n_2-2) & -L_3n_1
\end{pmatrix} \\ 
&\qquad=  (n_2 -1) (q-n_2 +1)  \left( L_1L_2 q^2 - L_1L_2 - L_2L_3n_1^2 -  L_1L_3n_2 (n_2 - 2)  \right),
\end{align*}
we have that $\det \left(n\mid z_{\alpha^1}\mid z_{\gamma^1}\right)=\det \left(n\mid z_{\alpha^1}\mid z_{\delta^1}\right)=0 $ if only if $2L_2L_3 n_1 = 0$. This contradicts the fact that~$L_2$, $L_3$, and~$n_1$ are
positive. Thus, one of the families~$\{n,z_{\alpha^1},z_{\gamma^1}\}$ or~$\{n,z_{\alpha^1},z_{\delta^1}\}$ is linearly independent. By Lemma~\ref{lemma-lin-indp2} it follows that
$Z^{\{1,2\},(n_1,n_2,q+1)}\in\GG^{q}$.

Using~\eqref{IH.C},~\eqref{step351.res},~\eqref{step352.res}, and the induction hypothesis~\eqref{IH-Cn1n2}, we conclude that $Z^{j(n),n} $
with $n = (n_1,n_2,q+1)$. Finally, we obtain 
\begin{align} \label{lem35.res}
Z^{j(n),n} \in \GG^{q}  \quad \text{for all~} n \in \RR^{q+1}_3.
\end{align} \qed

\subsubsection{Proof of Lemma \ref{lem-genR12}} \label{subsec-proof.genR12}

Notice that the
cases~$n \in  {\RR^{q+1}_1} $ and~$n \in  {\RR^{q+1}_2}$ are analogous. On the other hand, since we consider the periodicity assumption in
the third direction and Lions boundary conditions in the first two directions, these
cases must be addressed separately from the case~$n \in  {\RR^{q+1}_3}$ treated  in
section~\ref{subsec-proof.genR3}. Let us take~$n \in  {\RR^{q+1}_1}$. Again we divide this proof into three main steps
\begin{enumerate}
\renewcommand{\theenumi}{{\sf3.4.\arabic{enumi}}} 
 \renewcommand{\labelenumi}{} 
\item $\bullet$~Step~\theenumi:\label{Step361} Generating $Z^{1,n}$ with  $n \in \{ (q+1,l,0), (q+1,0,l) \mid 1 \le l\le q \}$.
\item $\bullet$~Step~\theenumi:\label{Step362} Generating $Z^{\{1,2\},n}$ with $n \in \{(q+1,l,1), (q+1,l,1) \mid 1 \le l \le q \}$.
\item $\bullet$~Step~\theenumi:\label{Step363} Generating $Z^{\{1,2\},n}$ with $n \in \{ (q+1,n_1,n_2) \mid 2 \le n_1,n_2 \le q \}$.
\end{enumerate}

\bigskip
\textbf{Step \ref{Step361}:} Generating $Z^{1,n}$ with $n \in \{ (q+1,l,0), (q+1,0,l) \mid 1 \le l\le q \}$.

{\em The case $n = (q+1,l,0)$}. We choose
\begin{align*}
k &=(q,l,0),& m &=(1,0,0),\\ w^k &= (L_1 l,-L_2 q, 0),& w^m &= (0,0,1),
\end{align*}
which gives us $\Pi \ZZ^{(q+1,l,0)}_{z_{\gamma^1}} \in \GG^{q}$. From 
$
\beta_{w^k,m}^{+\star_1\star_2} = \frac{\pi}{8}l,\quad\beta_{w^m,k} = 0,
$
with $\{\star_1,\star_2\}\subseteq \{+,-\}$, we get  
\begin{align*}
z_{\gamma^1} = \frac{\pi}{2}  \begin{pmatrix}
0 \\ 0 \\  l   
\end{pmatrix}. 
\end{align*} 
Observe that $\ZZ^{1,(q+1,l,0)}_{z_{\gamma^1}}=\frac{\pi}{2} l\begin{pmatrix}
                                                                    0\\0\\\cos(\frac{(q+1)\pi x_1}{L_1})\cos(\frac{l \pi x_2}{L_2}) 
                                                                   \end{pmatrix}$, it means that for a suitable constant $\zeta \neq 0$, $Z^{1,(q+1,l,0)} = \zeta \ZZ^{1,(q+1,l,0)}_{z_{\gamma^1}} $.
Hence, we conclude that
\begin{equation}\label{step361.res1}
 Z^{1,(q+1,l,0)}\in\GG^{q},\quad\mbox{for all}\quad 1\le l\le q.
\end{equation}

To generate $Z^{1,n}$ with $n = (q+1,0,l)$, we can use the result for the 2D cylinder in~\cite{PhanRod-ecc15}. Notice that, for some constant~$\zeta\ne0$,
 \[
  Z^{1,(q+1,0,l)} = \zeta\begin{pmatrix}
 \frac{l \pi }{L_3}\sin \left( \frac{(q+1) \pi x_1}{L_1} \right)  \sin \left( \frac{l \pi x_3}{L_3} \right)\\
0\\ 
\frac{(q+1) \pi }{L_1} \cos \left( \frac{(q+1) \pi x_1}{L_1} \right)  \cos \left( \frac{l \pi x_3}{L_3} \right),
 \end{pmatrix}
\]
which is an eigenfunction of the Stokes operator in the 2D cylinder~$(0,L_1)\times\frac{L_2}{\pi}\mathbb{S}^1$, under Lions boundary conditions.
It follows, from~\cite[Theorem~4.1]{PhanRod-ecc15}, that
\begin{equation}\label{step361.res2}
 Z^{1,(q+1,0,l)}\in\GG^{q},\quad\mbox{for all}\quad 1\le l\le q.
\end{equation}

\bigskip
\textbf{Step \ref{Step362}:} Generating $Z^{\{1,2\},n}$ with $n \in \{(q+1,l,1), (q+1,l,1) \mid 1 \le l \le q \}$.

{\em The case $n = (q+1,1,1)$ }. We firstly choose
\begin{align*}
k &=(q,0,1),& m &=(1,1,0),\\ w^k &= (L_1,0,-L_3q),& w^m &= (L_1,-L_2,0),
\end{align*}
Then, by changing the roles of~$k$ and~$m$ in from~\eqref{CoorYZ_Cyl}, we obtain
\begin{align*}
\left( Z^k \cdot \nabla   \right) Y^m + \left( Y^m \cdot \nabla   \right) Z^k = \ZZ^{(q+1,1,1)}_{z_{\alpha^1}} + \ZZ^{(q-1,1,1)}_{z_{\alpha^2}}, 
\end{align*}
for suitable $z_{\alpha^1}, z_{\alpha^2} \in \R^3$. By~\eqref{IH.C}, we have $Z^{\{1,2 \},(q-1,1,1)} \in \GG^{q}$. Therefore we derive
that~$\Pi\ZZ^{(q+1,1,1)}_{z_{\alpha^1}}\in\GG^{q}$, and we can also find
\begin{align*}
z_{\alpha^1}= \frac{\pi}{2}
\begin{pmatrix}
L_1 (q+1)\\
- L_2\\
 L_3q^2
\end{pmatrix}.
\end{align*}

Secondly, we compute~$\left( Z^m \cdot \nabla   \right) Y^k + \left( Y^k \cdot \nabla   \right) Z^m$ with the choice
\begin{align*}
k &=(q,1,1),& m &=(1,0,0),\\ w^k &= (0,L_2,-L_3),& w^m &= (0,0,L_3),
\end{align*}
Analogously, we obtain that $\Pi \ZZ^{(q+1,1,1)}_{z^{\gamma^1}} \in \GG^{q}$, with 
\begin{align*}
z_{\gamma^1}= \frac{\pi}{2}
\begin{pmatrix}
0 \\
L_2 \\
L_3
\end{pmatrix}.
\end{align*}

Thirdly, we compute~$\left( Z^m \cdot \nabla   \right) Y^k + \left( Y^k \cdot \nabla   \right) Z^m$ with
\begin{align*}
k &=(q,1,1),& m &=(1,0,0),\\ w^k &= (L_1,0,-L_3q),& w^m &= (0,0,L_3),
\end{align*}
which gives us $\Pi \ZZ^{(q+1,1,1)}_{z^{\delta^1}} \in \GG^{q}$, with
\begin{align*}
z_{\delta^1}= \frac{\pi}{2}
\begin{pmatrix}
L_1  \\
0 \\
 L_3(q-1)
\end{pmatrix}.
\end{align*}

Now, observe that $\det\left( n \mid z_{\alpha^1}~\mid z_{\gamma^1} \right) = \det \left(n \mid z_{\gamma^1} \mid z_{\delta^1} \right)=0$ if and only if 
\begin{align*}
L_2L_3 q^2 +L_1L_3+ L_2L_3-L_1L_2 = L_2L_3 q^2 -L_2L_3+L_1L_3-L_1L_2 = 0,
\end{align*}
which implies the contradiction $2L_2L_3 = 0$, because $L_2,L_3 >0$. Therefore, by Lemma~\ref{lemma-lin-indp2},
\begin{equation} \label{step362.res1}
Z^{\{1,2\},(q+1,1,1)} \in \GG^{q}.
\end{equation}

\medskip
{\em The case $n=( q +1,1,l)$}. Let us introduce the induction hypothesis
\addtocounter{equation}{1}
\begin{equation}\label{IH-Cq1l}
 Z^{j(k),k} \in \GG^{q} ,\quad\mbox{if}\quad k=(q+1,1,l-2),\quad\mbox{for given }2\le l\le q.
\end{equation}
We prove that $ Z^{j(k),k} \in \GG^{q}$ with $k=(q+1,1,l)$.

\medskip
To generate $n = (q+1,1,l)$, firstly we compute $\left( Z^m \cdot \nabla   \right) Y^k + \left( Y^k \cdot \nabla   \right) Z^m$ with the choice
\begin{align*}
k &=(q,1,l),& m &=(1,0,0),\\ w^k &= (0,L_2l,-L_3),& w^m &= (0,0,L_3),
\end{align*}
which allow us to conclude that $\Pi \ZZ^{(q+1,1,l)}_{z^{\alpha^1}} \in \GG^{q}$ with 
\begin{align*}
z_{\alpha^1}= \frac{\pi}{2}
\begin{pmatrix}
0 \\
L_2 l^2  \\
L_3 l
\end{pmatrix}.
\end{align*}
Secondly, we compute $\left( Z^m \cdot \nabla   \right) Y^k + \left( Y^k \cdot \nabla   \right) Z^m$  with
\begin{align*}
k &=(q,1,l),& m &=(1,0,0),\\ w^k &= (L_1l,0,-L_3q),& w^m &= (0,0,L_3),
\end{align*}
which gives us $\Pi \ZZ^{(q+1,1,l)}_{z^{\gamma^1}} \in \GG^{q}$, with 
\begin{align*}
z_{\gamma^1}= \frac{\pi}{2}
\begin{pmatrix}
 L_1 l^2  \\
0 \\
L_3 l (q-1)
\end{pmatrix}.
\end{align*}
Thirdly  we compute $\left( Z^k \cdot \nabla   \right) Y^m + \left( Y^m \cdot \nabla   \right) Z^k$ with
\begin{align*}
k &=(q,0,l),& m &=(1,1,0),\\ w^k &= (L_1l,0,-L_3q),& w^m &= (L_1,-L_2,0),
\end{align*}
which gives us  $\Pi \ZZ^{(q+1,1,l)}_{z^{\delta^1}} \in \GG^{q}$, with 
\begin{align*}
z_{\delta^1}=  \frac{\pi}{2}
\begin{pmatrix}
 L_1 l (q+1)  \\
- L_2l \\
 L_3 q^2
\end{pmatrix}.
\end{align*}
Next we observe that $\det\left( n \mid z_{\alpha^1}~\mid z_{\gamma^1} \right) = \det\left( n \mid z_{\alpha^1}~\mid z_{\delta^1} \right)  =0 $ if, and only if,
\begin{align*}
l^3 (L_2L_3 q^2 - L_2L_3 + L_1L_3 - L_1L_2 l^2) = l^2 (q+1) (L_2L_3 q ^2 + L_2L_3 + L_1L_3 - L_1L_2l) = 0 
\end{align*}
which leads to the contradiction $0=2L_1L_2l(l-1) + 2 L_2L_3 \ge 2L_2(L_1 + L_3) > 0$, since $2 \le l \le q$. Then from Lemma~\ref{lemma-lin-indp2}
we conclude that
$
  \ZZ^{(q+1,1,l)}_{z^{\delta^1}} \in \GG^{q}. 
$
By induction, using~\eqref{step361.res1}, \eqref{step362.res1}, and the induction
hypothesis~\eqref{IH-Cq1l}, it follows that
\begin{equation}\label{step362.res2}
\ZZ^{(q+1,1,l)}_{z^{\delta^1}} \in \GG^{q},\quad\mbox{for all}\quad 1\le l\le q.
\end{equation}

\medskip
{\em The case $n=( q +1,l,1)$.} Let us introduce the induction hypothesis
\addtocounter{equation}{1}
\begin{equation}\label{IH-Cql1}
 Z^{j(k),k} \in \GG^{q} ,\quad\mbox{with}\quad k=(q+1,1,l-2),\quad l\ge2.
\end{equation}
We prove that $ Z^{j(k),k} \in \GG^{q}$ with $ k=(q+1,1,l)$.

To generate $n = (q+1,l,1)$, firstly we choose 
\begin{align*}
k &=(q,l,1),& m &=(1,0,0),\\ w^k &= (0,L_2,-L_3 l),& w^m &= (0,0,L_3),
\end{align*}
which allow us to conclude that $\Pi \ZZ^{(q+1,1,l)}_{z^{\alpha^1}} \in \GG^{q}$ with 
\begin{align*}
z_{\alpha^1}= \frac{\pi}{2}
\begin{pmatrix}
0 \\
L_2   \\
L_3 l
\end{pmatrix}.
\end{align*}
Secondly, we compute $\left( Z^m \cdot \nabla   \right) Y^k + \left( Y^k \cdot \nabla   \right) Z^m$  with
\begin{align*}
k &=(q,l,1),& m &=(1,0,0),\\ w^k &= (L_1,0,-L_3q),& w^m &= (0,0,L_3),
\end{align*}
which gives us $\Pi \ZZ^{(q+1,1,l)}_{z^{\gamma^1}} \in \GG^{q}$, with 
\begin{align*}
z_{\gamma^1}= \frac{\pi}{2}
\begin{pmatrix}
 L_1  \\
0 \\
L_3 l (q-1)
\end{pmatrix}.
\end{align*}
Thirdly  we compute $\left( Z^k \cdot \nabla   \right) Y^m + \left( Y^m \cdot \nabla   \right) Z^k$ with
\begin{align*}
k &=(q,l,0),& m &=(1,0,1),\\ w^k &= (L_1 l,-L_2 q, 0),& w^m &= (L_1,0, -L_3),
\end{align*}
which gives us  $\Pi \ZZ^{(q+1,l,1)}_{z^{\delta^1}} \in \GG^{q}$, with 
\begin{align*}
z_{\delta^1}=  \frac{\pi}{2}
\begin{pmatrix}
 L_1 l (q+1)  \\
- L_2 q^2 \\
 L_3 l
\end{pmatrix}.
\end{align*}
Next we observe that $\det\left( n \mid z_{\alpha^1} \mid z_{\gamma^1} \right) = \det\left( n \mid z_{\alpha^1} \mid z_{\delta^1} \right)  =0 $ if, and only if,
\begin{align*}
L_2 L_3 l q (q-1) + L_1 (L_3 l^2 - L_2) = L_2 L_3 q (q^2 + 1) + L_1 (L_3 l^2 - L_2) (q+1) = 0 
\end{align*}
which leads to the contradiction $0= (l-1) (q^2 -1) - 2 \ge q^2-3 > 0$, since $2 \le l \le q$. Then from Lemma~\ref{lemma-lin-indp2}
we conclude that
$
\ZZ^{(q+1,l,1)}_{z^{\delta^1}} \in \GG^{q}. 
$
By induction, using~\eqref{step361.res2}, \eqref{step362.res1}, and the induction
hypothesis~\eqref{IH-Cql1}, it follows that
\begin{align} \label{step362.res3}
Z^{\{1,2 \}(q+1,l,1)} \in \GG^{q} \quad \mbox{for all~} 1 \le l \le q.
\end{align}

\medskip
\textbf{Step \ref{Step363}:} Generating $Z^{\{1,2\},n}$ with $n \in \{ (q+1,n_1,n_2) \mid 2 \le n_1,n_2 \le q \}$.

\medskip
Let us introduce the inductive hypothesis
\begin{align}\label{IH-Cylqn1n2}
\begin{split}
&Z^{j(\kappa),\kappa} \in \GG^{q},\\ 
&\mbox{for}\quad \kappa\in \{ (q+1,n_1-2,n_2-2), (q+1,n_1,n_2-2), (q+1,n_1-2,n_2) \}.
\end{split}
\end{align}
We will prove that $Z^{j(\kappa),\kappa} \in \GG^{q}$ with $\kappa = (q+1,n_1,n_2)$.

\medskip   

To generate $n = (q+1, n_1,n_2)$, firstly we compute~$\left( Z^m \cdot \nabla   \right) Y^k + \left( Y^k \cdot \nabla   \right) Z^m$ with
\begin{align*}
k &=(q,n_1,n_2),& m &=(1,0,0),\\ w^k &= (0,L_2n_2,-L_3n_1),& w^m &= (0,0,L_3),
\end{align*}
which leads to $\Pi \ZZ^{(q+1,n_1,n_2)}_{z_{\alpha^1}} \in \GG^{q}$, with 
\begin{align*}
z_{\alpha^1}=  \frac{\pi}{2}
\begin{pmatrix}
0 \\
L_2 n_2^2  \\
L_3 n_1 n_2
\end{pmatrix}.
\end{align*}
Secondly we compute~$\left( Z^m \cdot \nabla   \right) Y^k + \left( Y^k \cdot \nabla   \right) Z^m$  with
\begin{align*}
k &=(q,n_1,n_2),& m &=(1,0,0),\\ w^k &= (L_1 n_2,0,-L_3q),& w^m &= (0,0,L_3),
\end{align*}
which leads us to $\Pi \ZZ^{(q+1,n_1,n_2)}_{z_{\gamma^1}} \in \GG^{q}$ where 
\begin{align*}
z_{\gamma^1}=  \frac{\pi}{2}
\begin{pmatrix}
 L_1 n_2^2  \\
0  \\
 L_3 n_2 (q-1)
\end{pmatrix}.
\end{align*}
Thirdly we compute~$\left( Z^k \cdot \nabla   \right) Y^m + \left( Y^m \cdot \nabla   \right) Z^k$ with
\begin{align*}
k &=(q,n_1-1,n_2),& m &=(1,1,0),\\ w^k &= (L_1(n_1-1),-L_2q,0),& w^m &= (L_1,-L_2,0),
\end{align*}
which gives us  $\Pi \ZZ^{(q+1,n_1,n_2)}_{z_{\delta^1}} \in \GG^{q}$ with 
\begin{align*}
z_{\delta^1}=  \frac{\pi}{4}
\begin{pmatrix}
 L_1 (q-n_1+1)(n_1-2) \\
  L_2 (q-n_1+1)(2-q) \\
0
\end{pmatrix}.
\end{align*}
Now $\det\left( n \mid z_{\alpha^1} \mid z_{\gamma^1} \right) = \det\left( n \mid z_{\alpha^1} \mid z_{\delta^1} \right)  =0  $ if, and only if, 
\begin{subequations}
\begin{align}
n_2^3 (L_2L_3 q^2 - L_2L_3 + L_1L_3 n_1^2 - L_1L_2 n_2^2) &= 0 \label{step23-eq1} \\
(q-n_1 + 1) n_2 ( L_2L_3 n_1 (q+1) (q-2) + (n_1 -2) (L_1L_3 n_1^2 - L_1L_2 n_2^2) )&= 0 \label{step23-eq2}
\end{align}
\end{subequations}
Since $0<2 \le n_2,n_1\le q$, from \ref{step23-eq1}, we have $L_1L_3 n_1^2 - L_1L_2 n_2^2 = L_2L_3 (1-q^2)$. Then, after substitution into  \ref{step23-eq2} and
since~$n_2(q-n_1 + 1)\ge n_2>0$, we arrive to the contradiction~$0=L_2L_3 (q+1) (2q -n_1 -2) >L_2L_3 (q+1) (q-2)>0$, because $q\ge4$ and~$L_2,L_3>0$. 
Therefore by Lemma~\ref{lemma-lin-indp2} it follows that~$Z^{(q+1,n_1,n_2)}_{z_{\delta^1}} \in \GG^{q}$. 

By induction, using~\eqref{step361.res1}, \eqref{step361.res2},
\eqref{step362.res2}, \eqref{step362.res3}, and the induction hypothesis~\eqref{IH-Cylqn1n2}, we obtain
\begin{subequations}\label{lem36.res}
\begin{align} 
Z^{j(n),n} \in \GG^{q}  \quad \text{for all~} n  \in \RR^{q+1}_1,
\intertext{and a similar argument gives us}
Z^{j(n),n} \in \GG^{q}  \quad \text{for all~} n  \in \RR^{q+1}_2.
\end{align} 
\end{subequations} \qed

\subsubsection{Proof of Lemma \ref{lem-gen00}} \label{subsec-proof.gen00}

To generate $n = (q+1,0,0)$, we choose
\begin{align*}
k &=(q, 0,1),& m &=(1,0,1),\\ w^k &= (L_1, 0, -L_3q),& w^m &= (L_1,0,-L_3),
\end{align*}
which gives us
\begin{align*}
\left( Y^k \cdot \nabla   \right) Z^m + \left( Z^m \cdot \nabla   \right) Y^k
= \ZZ^{(q+1,0,0)}_{z_{\alpha^1}} + \ZZ^{(q+1,0,2)}_{z_{\alpha^2}} + \ZZ^{(q-1,0,0)}_{z_{\alpha^3}} +  \ZZ^{(q-1,0,2)}_{z_{\alpha^3}}.
\end{align*}
From~\eqref{IH.C} and~\eqref{lem36.res}, we can conclude that $\Pi \ZZ^{(q+1,0,0)}_{z_{\alpha^1}} \in \GG^{q}$ where 
\begin{align*}
z_{\alpha^1}= -\frac{\pi }{4}
\begin{pmatrix}
0 \\
0 \\
 L_3 (q-1)^2 \\
\end{pmatrix}.
\end{align*}
Now, since $L_3 (q-1)^2\ne0$ we have that $Z^{1,(q+1,0,0)} =\zeta \ZZ^{(q+1,0,0)}_{z_{\alpha^1}}$. Therefore 
\begin{subequations}\label{lem37.res}
\begin{align} 
&Z^{1,(q+1,0,0)} \in \GG^{q},
\intertext{and a similar argument gives us}
&Z^{1,(0,q+1,0)} \in \GG^{q}.
\end{align} 
\end{subequations} \qed

\subsubsection{Proof of Lemma \ref{lem-genL}} \label{subsec-proof.genL}
Due to two different types of boundary conditions, we divide the proof into two steps
\begin{enumerate}
\renewcommand{\theenumi}{{\sf3.8.\arabic{enumi}}} 
 \renewcommand{\labelenumi}{} 
\item $\bullet$~Step~\theenumi:\label{Step381} Generating $Z^{j(n),n}$ with $n  \in  \LL^{q+1}_{2,3}\bigcup \LL^{q+1}_{3,1}$.
\item $\bullet$~Step~\theenumi:\label{Step382} Generating $Z^{j(n),n}$ with $n  \in  \LL^{q+1}_{1,2}$.
\end{enumerate}

\medskip
\textbf{Step \ref{Step381}:} Generating $Z^{j(n),n}$ with $n  \in  \LL^{q+1}_{2,3}\bigcup \LL^{q+1}_{3,1}$.

To generate~$n = (l,\,q+1,\,q+1)$, with~$1 \le l \le q$. We start computing~$\left( Y^k \cdot \nabla   \right) Z^m + \left( Z^m \cdot \nabla   \right) Y^k$ with
\begin{align*}
k &=(l, q-1,q),& m &=(0,2,1),\\ w^k &= (0, L_2q, L_3(1-q)),& w^m &= (0,L_2,-2L_3),
\end{align*}
and obtain that $\Pi \ZZ^{(l,q+1,q+1)}_{z_{\alpha^1}} \in \GG^{q}$ with
\begin{align*}
z_{\alpha^1} =  -\frac{\pi}{4} \begin{pmatrix}
0 \\ L_2 (q^2-1) \\  L_3(q+1) (q-3)
\end{pmatrix}.
\end{align*}
Next we compute~$\left( Y^k \cdot \nabla   \right) Z^m + \left( Z^m \cdot \nabla   \right) Y^k$
\begin{align*}
k &=(l, q,q),& m &=(0,1,1),\\ w^k &= (L_1q, -L_2l, 0),& w^m &= (0,L_2,-L_3),
\end{align*}
and obtain $\Pi\ZZ^{(l,q+1,q+1)}_{z_{\gamma^1}}\in \GG^{q}$ with 
\begin{align*}
z_{\gamma^1} = -\frac{\pi}{4} \begin{pmatrix}
0 \\ L_2l \\  L_3l
\end{pmatrix}.
\end{align*}
Since 
\begin{align*} \frac{16}{\pi^2} \det \left(n \mid z_{\alpha^1} \mid z_{\gamma^1} \right)=
\det \begin{pmatrix}
l & 0 & 0 \\
q+1 & L_2(q^2-1) & L_2l \\
q+1 & L_3(q+1)(q-3) & L_3l 
\end{pmatrix} = 2 L_2L_3 l^2 (q+1) > 0,
\end{align*}
from Lemma~\ref{lemma-lin-indp_Z} we obtain that
\begin{subequations}\label{step381.res1}
\begin{align}
  Z^{\{1,2\}, (l,q+1,q+1)}\in \GG^{q},\quad\mbox{for}\quad1 \le l \le q,
  \intertext{and a similar argument gives us}
  Z^{\{1,2\}, (q+1,l,q+1)}\in \GG^{q},\quad\mbox{for}\quad1 \le l \le q,
\end{align}
\end{subequations}

{\em The case~$n= (0,q+1,q+1)$}. We can use again~\cite[Theorem~4.1]{PhanRod-ecc15} to conclude that

\begin{subequations}\label{step381.res2}
\begin{align} 
&Z^{1,(0,q+1,q+1)} \in \GG^{q},
\intertext{and a similarly}
&Z^{1,(q+1,0,q+1)} \in \GG^{q}.
\end{align} 
\end{subequations}

\medskip
\textbf{Step \ref{Step382}:} Generating $Z^{j(n),n}$ with $n  \in  \LL^{q+1}_{1,2}$.

 To generate $n = (q+1, q+1, l)$ with $1 \le l \le q$, we first compute~$\left( Y^m \cdot \nabla   \right) Z^k + \left( Z^k \cdot \nabla   \right) Y^m$ with
\begin{align*}
k &=(q, q,l),& m &=(1,1,0),\\ w^k &= (L_1l, 0, -L_3q),& w^m &= (L_1,-L_2,0),
\end{align*}
which gives us~$\Pi \ZZ^{(q+1,q+1,l)}_{z_{\alpha^1}} \in \GG^{q}$, with 
\begin{align*}
z_{\alpha^1} = \frac{\pi}{4} \begin{pmatrix}
 L_1l \\ - L_2l \\ 0 
\end{pmatrix}. 
\end{align*} 
Next, we choose compute~$\left( Y^m \cdot \nabla   \right) Z^k + \left( Z^k \cdot \nabla   \right) Y^m$ with
\begin{align*}
k &=(q, q-1,l),& m &=(1,2,0),\\ w^k &= (L_1(q-1), -L_2q, 0),& w^m &= (2L_1,-L_2,0),
\end{align*}
which gives us~$\Pi \ZZ^{(q+1,q+1,l)}_{z_{\gamma^1}} \in \GG^{q}$, with 
\begin{align*}
z_{\gamma^1} = \frac{\pi}{4}  \begin{pmatrix}
L_1(q+1)(q-3) \\ -L_2 (q^2-1) \\ 0 
\end{pmatrix}. 
\end{align*} 
From 
\begin{align*}  \frac{16}{\pi^2} \det \left(n \mid z_{\alpha^1}\mid z_{\gamma^1} \right)=
\det \begin{pmatrix}
q+1 &L_1l & L_1(q+1)(q-3) \\
q+1 & -L_2l & -L_2 (q^2-1) \\
l & 0 & 0 
\end{pmatrix} = \frac{32}{\pi^2}L_1L_2 l^2 (q+1) > 0,
\end{align*}
and Lemma~\ref{lemma-lin-indp_Z} we obtain that
\begin{equation}\label{step382.res1}
 Z^{\{1,2\}, (q+1,q+1,l)}\in \GG^{q}, \quad\mbox{for}\quad 1 \le l \le q.
\end{equation}

To generate $n = (q+1,q+1,0)$ we choose
\begin{align*}
k &=(q, q-1,1),& m &=(1,2,1),\\ w^k &= (L_1, 0, -L_3q),& w^m &= (L_1,0,-L_3),
\end{align*}
which gives us
\[
 \left( Y^k \cdot \nabla   \right) Z^m + \left( Z^m \cdot \nabla   \right) Y^k=\ZZ^{(q+1,q+1,0)}_{z_{\alpha^1}}+\ZZ^{(q+1,q+1,2)}_{z_{\alpha^2}}+\sum_{i=3}^8\ZZ^{k^i}_{z_{\alpha^i}}
\]
with~$k^i\in\{(q+1,q-3,0),(q+1,q-3,2),(q-1,q+1,0),(q-1,q+1,2),(q-1,q-3,0),(q-1,q-3,2)\}$. Recalling that~$q\ge4$, from~\eqref{IH.C} and~\eqref{lem36.res} it follows that
$\sum_{i=3}^8\Pi\ZZ^{k^i}_{z_{\alpha^i}}\in \GG^{q}$ and, from~\eqref{step382.res1} we have that~$\Pi\ZZ^{(q+1,q+1,2)}_{z_{\alpha^2}}\in\GG^{q}$. Therefore we obtain
$\Pi \ZZ^{(q+1,q+1,0)}_{z_{\alpha^1}} \in \GG^{q}$, and we can find 
\begin{align*}
z_{\alpha^1} = \frac{\pi}{8}  \begin{pmatrix}
-L_1(q+1) \\ 0 \\ (q+1)(-L_3q+L_3) 
\end{pmatrix}. 
\end{align*} 
Observe that~$\ZZ^{1,(q+1,q+1,0)}_{z_{\alpha^1}}=-\frac{\pi}{8}(q^2-1)L_3\begin{pmatrix}
                                                                    0\\0\\\cos(\frac{(q+1)\pi x_1}{L_1})\cos(\frac{(q+1)\pi x_2}{L_2}) 
                                                                   \end{pmatrix}$, that is,
for a suitable constant~$\zeta\ne0$, we have $Z^{1,(q+1,q+1,0)}=\zeta\ZZ^{1,(q+1,q+1,0)}_{z_{\alpha^1}}$.
In particular~$\Pi\ZZ^{1,(q+1,q+1,0)}_{z_{\alpha^1}}=\ZZ^{1,(q+1,q+1,0)}_{z_{\alpha^1}}$ and it follows
\begin{align} \label{step382.res2}
Z^{1,(q+1,q+1,0)} \in \GG^{q}.
\end{align}

To sum up, from~\eqref{step381.res1}, \eqref{step381.res2}, \eqref{step382.res1}, and~\eqref{step382.res2}, it follows that
\begin{align}\label{lem38.res}
Z^{j(n),n} \in \GG^{q},\quad\mbox{for all}\quad n\in\LL^{q+1}_{1,2}{\textstyle\bigcup}\LL^{q+1}_{2,3}{\textstyle\bigcup}\LL^{q+1}_{3,1}.
\end{align} \qed

\subsubsection{Proof of Lemma \ref{lem-genq+1}}\label{subsec-proof.genq+1}

Firstly, we compute~$\left( Y^k \cdot \nabla   \right) Z^m + \left( Z^m \cdot \nabla   \right) Y^k$
\begin{align*}
k &=(q,q-1,q),& m &=(1,2,1),\\ w^k &=  (L_1,0,-L_3),& w^m &= (2L_3,-L_2,0),
\end{align*}
which gives us~$\Pi \ZZ^{(q+1,q+1,q+1)}_{z_{\alpha^1}}\in \GG^{q}$ where 
\begin{align*} z_{\alpha^1} =  \frac{\pi}{8}
\begin{pmatrix}
2 L_1 (q+1)\\
- L_2(q+1) \\
0
\end{pmatrix}. 
\end{align*}
Secondly, we compute~$\left( Y^m \cdot \nabla   \right) Z^k + \left( Z^k \cdot \nabla   \right) Y^m$ 
\begin{align*}
k &=(q,q-1,q),& m &=(1,2,1),\\ w^k &=  (L_1,0,-L_3),& w^m &= (2L_3,-L_2,0),
\end{align*}
which gives us~$\Pi \ZZ^{(q+1,q+1,q+1)}_{z_{\gamma^1}}\in \GG^{q}$, with
\begin{align*} z_{\gamma^1} = \frac{\pi}{8}
\begin{pmatrix}
- L_1 (q+1)(q-3)\\
 L_2(q^2-1) \\
0
\end{pmatrix}. 
\end{align*}
Since 
\begin{align*} \frac{64}{\pi^2(q+1)^3} \det \left(n \mid z_{\alpha^1}\mid z_{\gamma^1} \right) =
\det \begin{pmatrix}
1 & 2L_1 & -L_1  (q-3) \\ 
1 & -L_2  & L_2 (q-1) \\
1 & 0 & 0 
\end{pmatrix} = L_1L_2(q+1)  > 0, 
\end{align*}
by Lemma~\ref{lemma-lin-indp_Z}, we find
\begin{align} \label{lem39.res} 
Z^{\{1,2\}, (q+1,q+1,q+1)}\in \GG^{q}.
\end{align} \qed

\section{Final remarks}
Following the approximate controllability by degenerate low modes forcing proven in \cite{PhanRod18JDCS,Shirikyan06}, we present an explicit ${(\rm L, D}(A))$-saturating set in a general 3D Cylinder. This case is as an extended result in the work of 2D Cylinder (see \cite{PhanRod-ecc15}). However we just get the control $\eta \in L^\infty((0,T), \GG^1)$ instead of  $L^\infty((0,T), \GG^0)$ in 2D Cylinder case. The reason is that we do not have the equality $B(Y^k, Y^k) = 0$ for all  $k$ as in 2D Cylinder case (see more details in \cite[Theorem 3.2]{PhanRod18JDCS}). 

We underline that the presented saturating set is (by definition) independent of the viscosity coefficient $\nu$. That is, approximate controllability holds by means of controls taking values in $\GG^1 = {\rm span} \CC + {\rm span} \BB(\CC, {\rm span} \CC) =  {\rm span}\left(\CC \cup \BB(\CC, \CC) \right)$, for any $\nu > 0$. Note that a ${(\rm L, D}(A))$-saturating set with  less elements does exist. One of them will be introduced in next corollary. 
\begin{cor} \label{cor-sat-cyl}
The set of eigenfuntions~
\begin{align*}
\widetilde{\CC} \coloneqq &\left\{ Y^{j(n),n} \mid~n\in\N^3,~\#_0(n)\le1,~n_i\le 3,~j(n) \in \{ 1, 2-\#_0(n) \}\right\} \\
& {\textstyle\bigcup}  \left\{ Z^{j(n),n} \mid~n\in\N^3,~\#_0(n)\le1,~n_i\le {4},~j(n) \in \{ 1, 2-\#_0(n) \}\right\} \\
& {\textstyle\bigcup} \left\{ Z^{j(n),n} \mid~n \in \N^3,~\#_0(n)=2,~n_3=0 \right\}  ~ {\textstyle\bigcup}  \left\{Z^{(0,0,0)} \right \}
\end{align*}
is saturating. 
\end{cor}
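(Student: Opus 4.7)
The plan is to establish $\CC\subseteq \GG^N_{\widetilde{\CC}}$ for some finite~$N$, after which the inductive argument of Theorem~\ref{T:satur3Dcyl} applies verbatim to $\widetilde{\CC}$ (with all indices shifted by~$N$) and yields density of $\bigcup_j\GG^j_{\widetilde{\CC}}$ in~$H$. The only elements of $\CC$ absent from $\widetilde{\CC}$ are the eigenfunctions $Y^{j(n),n}$ with $\#_0(n)\le 1$ and $\max_i n_i=4$; the key structural observation is that in the proofs of Lemmas~\ref{lem-genR3}--\ref{lem-genq+1}, every vector appearing in the role of ``$a$'' in a bracket $\BB(a,b)$ has all its coordinates bounded by~$2$, and hence already lies in $\widetilde{\CC}$. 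Consequently, once those missing $Y$-functions are secured, the entire cylinder argument runs identically with $\widetilde{\CC}$ as the base.

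To secure the missing $Y^{j(n),n}$, I would appeal to Theorem~\ref{T:satur3Drect}: the rectangle $Y$-eigenfunctions with $n_i\le 3$ form an $({\tt L},\D(A))$-saturating set for the 3D rectangle, so its proof produces each $Y^{j(n),n}$ with $\max n_i=4$ in finitely many iterations using only brackets $\BB(Y^k,Y^m)$ with $k,m$ of coordinates~$\le 3$. The parity structure in the $x_3$-direction ($Y$'s carry cosines in the first two components and a sine in the third, while $Z$'s do the opposite) ensures that $(Y^k\cdot\nabla)Y^m+(Y^m\cdot\nabla)Y^k$ belongs entirely to the $Y$-subspace of~$H$, in both the rectangle and the cylinder. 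Therefore the same sequence of brackets is valid in the cylinder; and since all indices involved satisfy $n_i\le 3$, the required eigenfunctions $Y^k,Y^m$ are available in $\widetilde{\CC}$. This places every $Y^{j(n),n}$ with $\max n_i=4$ and $\#_0(n)\le 1$ in some~$\GG^N_{\widetilde{\CC}}$.

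The main obstacle will be a careful verification of the parity claim explicitly, analogously to~\eqref{CoorYZ_Cyl}, and the book-keeping of the precise value of~$N$; however, no new determinant computations or applications of Lemmas~\ref{lemma-lin-indp_Z}--\ref{lemma-lin-indp2} should be required beyond those already in \cite{PhanRod18JDCS} and Section~\ref{sec: ProofSat3DCyl} of the present paper. Once $\CC\subseteq \GG^N_{\widetilde{\CC}}$ is established, the saturating property of $\widetilde{\CC}$ follows directly from Theorem~\ref{T:satur3Dcyl}.
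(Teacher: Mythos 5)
Your proposal is correct and follows essentially the same route as the paper: the paper likewise splits $\widetilde{\CC}$ into its $Y$- and $Z$-parts, invokes Theorem~\ref{T:satur3Drect} to recover the $Y^{j(n),n}$ with a coordinate equal to $4$ (and beyond) from the $Y$'s with $n_i\le 3$, and then repeats the $Z$-generation of Theorem~\ref{T:satur3Dcyl} unchanged, concluding $\CC^q_Y\subseteq\GG^{q-2}$ and $\CC^q_Z\subseteq\GG^{q-1}$ for all $q\in\N_4$. Your explicit check that every bracket in Lemmas~\ref{lem-genR3}--\ref{lem-genq+1} uses as its $\CC$-element an eigenfunction with coordinates at most $2$ (hence in $\widetilde{\CC}$) is exactly the point the paper's shorter argument leaves implicit, so no gap remains.
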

\begin{proof}
Denote $\widetilde{\CC}  = \CC_Y \cup \widetilde{\CC}_Z$ where 
\begin{align*}
\CC_Y &\coloneqq \left\{ Y^{j(n),n} \mid~n\in\N^3,~\#_0(n)\le1,~n_i\le 3,~j(n) \in \{ 1, 2-\#_0(n) \}\right\}, \\
\widetilde{\CC}_Z &\coloneqq \left\{ Z^{j(n),n} \mid~n\in\N^3,~\#_0(n)\le1,~n_i\le {4},~j(n) \in \{ 1, 2-\#_0(n) \}\right\} \\ 
&\quad  {\textstyle\bigcup}  \left\{ Z^{j(n),n} \mid~n \in \N^3,~\#_0(n)=2,~n_3=0 \right\} ~ {\textstyle\bigcup}  \left\{Z^{(0,0,0)} \right \}. 
\end{align*}
Notice that $\CC_Y$ is the same set as in Theorem \ref{T:satur3Drect}.
We recall the definition of $\CC^q_Y$ and $\CC^q_Z$ as in \eqref{splitCYZ}. 
Using Theorem \ref{T:satur3Drect}, we get that $\CC^q_Y \subseteq \GG^{q-2} \subseteq \GG^{q-1}$ for all $q \in \N_4$. Repeating the arguments in the proof of Theorem \ref{T:satur3Dcyl}, we get that $\CC^q_Z \subseteq \GG^{q-1}$ for all $q \in \N_4$. 
In conclusion it yields that $\widetilde{\CC}$ is also a saturating set with less elements than $\CC$ in Theorem \ref{T:satur3Dcyl}.  
\end{proof}
As mentioned in the beginning of this work, it is not our goal to find a saturating set with minimal number of elements.

\medskip

\end{document}